\documentclass[12pt,a4paper,reqno]{amsart}

\usepackage[a4paper, margin=24mm, bottom=24mm, top=27mm]{geometry}
\usepackage{amsmath,amssymb,amsthm,amscd}
\usepackage{mathtools}

\usepackage[T1]{fontenc}
\usepackage{newtxtext,newtxmath}
\usepackage[colorlinks=true,linkcolor=cyan,citecolor=magenta,urlcolor=blue]{hyperref}
\usepackage{enumerate}
\usepackage[all,ps]{xypic}
\usepackage{tikz-cd}

\newtheorem{theorem}{Theorem}[section]
\newtheorem{proposition}[theorem]{Proposition}
\newtheorem{lemma}[theorem]{Lemma}
\newtheorem{corollary}[theorem]{Corollary}
\theoremstyle{definition}
\newtheorem{definition}[theorem]{Definition}
\newtheorem{remark}[theorem]{Remark}
\newtheorem{example}[theorem]{Example}

\newtheorem{construction}[theorem]{Construction}

\newcommand{\OrdFor}{\mathbf{OrdFor}}
\newcommand{\Quiv}{\mathbf{Quiv}}

\newcommand{\sC}{s\mathcal{C}}
\newcommand{\For}{\mathbf{For}}

\newcommand{\arb}[1]{\mathrm{arb}{#1}}
\newcommand{\ArbMod}[1]{\mathrm{arb}\mathbf{Mod}_{#1}}
\newcommand{\Mod}[1]{\mathbf{Mod}\text{-}{#1}}
\newcommand{\sMod}[1]{\mathbf{sMod}_{#1}}

\newcommand{\Fib}{\mathrm{Fib}}
\newcommand{\Cof}{\mathrm{Cof}}
\newcommand{\Hom}{\mathrm{Hom}}
\newcommand{\Mor}{\mathrm{Mor}}
\newcommand{\Ho}{\mathrm{ho}}
\newcommand{\Ch}{\mathbf{Ch}}

\numberwithin{equation}{section}

\title{Arboreal Objects and Their Homotopy Theory}
\author{Atabey Kaygun}
\address{Istanbul Technical University, Istanbul}
\email{kaygun@itu.edu.tr}

\begin{document}
\maketitle

\begin{abstract}
  We construct a category $\OrdFor$ as an arboreal extension of
  $\Delta_{\mathrm{epi}}\subseteq\Delta$, whose morphisms are ordered forests composed by
  grafting. We define a full functor $\pi\colon \OrdFor\to\Delta_{\mathrm{epi}}^{op}$ extracting
  the semisimplicial shadow. For every complete category $\mathcal C$, this induces a fully
  faithful functor from semisimplicial objects in $\mathcal C$ to $\mathcal C$-valued presheaves on
  $\OrdFor$, with right adjoint given by right Kan extension. We show that if weak equivalences of
  arboreal objects are detected by this right adjoint, then their Gabriel--Zisman localization is
  equivalent to that of semisimplicial objects. For bicomplete cofibrantly generated model
  categories, under the usual acyclicity hypothesis for right-induced transfer, the corresponding
  model structure on arboreal objects is Quillen equivalent to the Reedy model structure on
  semisimplicial objects.
\end{abstract}

\section*{Introduction}

In this paper we construct a category $\OrdFor$ of ordered forests, defined in purely
order-theoretic terms, as an arboreal extension of the subcategory
$\Delta_{\mathrm{epi}}\subseteq\Delta$ of surjections of the simplex category. Its morphisms are
finite posets equipped with compatible total orders and satisfying an interval condition on
principal lower sets. Every ordered forest decomposes canonically into ordered trees, and
boundary-labelled forests compose by grafting along maximal and minimal strata. Thus $\OrdFor$
provides a tree-like category built from classical order-theoretic data.

This places $\OrdFor$ near the planar side of dendroidal combinatorics, but not inside the usual
operadic framework. In the dendroidal category $\Omega$, morphisms are induced by maps of free
operads on rooted trees \cite{MoerdijkWeiss2007,HeutsMoerdijk2022}, so composition is governed by
operadic substitution at vertices and therefore has an essentially many-to-one character. In
$\OrdFor$, by contrast, morphisms are boundary-labelled ordered forests, and composition is given
by external grafting along maximal and minimal strata via a restricted cospan calculus. Internal
vertices are not altered. The resulting combinatorics is therefore closer to a many-to-many gluing
formalism, analogous to that of PROPs, than to operadic substitution. Our aim is not to reproduce
$\Omega$, but to isolate an order-theoretic category of rooted forests with boundary grafting and a
canonical linear, semisimplicial shadow. In this sense, the relevant formal background is closer to
the compositional theory of cospans \cite{Lack2004,RosebrughSabadiniWalters2005} than to operadic
composition, while the use of compatible total orders may be viewed as a combinatorial
rigidification in the spirit of Berger \cite{Berger2022}.

The link with linear combinatorics is given by a full functor
$\pi\colon \OrdFor\longrightarrow \Delta_{\mathrm{epi}}^{op}$ which sends an ordered forest to the
surjection determined by its minimal and maximal strata. For any complete category $\mathcal C$,
precomposition with $\pi^{op}$ yields a fully faithful functor
$\pi^\ast\colon \mathcal C^{\Delta_{\mathrm{epi}}}\longrightarrow \mathcal C^{\OrdFor^{op}}$ with
right adjoint $\pi_\ast$ given by right Kan extension. The homotopical point of the paper is that
$\pi_\ast$ does more than compare arboreal and semisimplicial objects: it detects the weak
equivalences relevant for the arboreal homotopy theory.

This has two consequences. First, at the level of Gabriel--Zisman localization, whenever weak
equivalences of arboreal objects are defined by detection through $\pi_\ast$, the resulting
homotopy category is equivalent to that of semisimplicial objects. Second, if $\mathcal C$ is a
bicomplete cofibrantly generated model category and the standard acyclicity condition for
right-induced transfer holds, then $\arb{\mathcal C}=\mathcal C^{\OrdFor^{op}}$ admits the
right-induced model structure along $\pi_\ast$, and the adjunction
\[
  \pi^\ast\colon \mathcal C^{\Delta_{\mathrm{epi}}}
  \rightleftarrows
  \mathcal C^{\OrdFor^{op}}:\pi_\ast
\]
is a Quillen equivalence. Thus the additional arboreal combinatorics carried by $\OrdFor$ does not
change the resulting homotopy theory once weak equivalences are measured through the semisimplicial
shadow.

\subsection*{What is known?}\label{subsec:what-is-known}

The starting point is classical simplicial theory. The simplicial category $\Delta$ is the basic
indexing category for simplicial constructions, and its subcategory
$\Delta_{\mathrm{epi}}\subseteq\Delta$ of surjections is equally standard. Passing to opposites
identifies $\Delta_{\mathrm{epi}}^{op}$ with $\Delta_{\mathrm{inj}}$, hence with the indexing
category for semisimplicial objects \cite{MacLane1998}.  Segal's category $\Gamma$ provides another
classical linear indexing category, adapted to commutative homotopy-coherent structures rather than
simplicial ones \cite{Segal1974}. In all these cases, the basic shapes are linear.

For operadic and higher-multiplicative phenomena, linear shapes are no longer adequate. Beginning
with Boardman--Vogt and continuing throughout modern operad theory, rooted trees replace simplices
as the relevant combinatorial shapes
\cite{BoardmanVogt1973,BergerMoerdijk2007,LodayVallette2012}. The standard categorical realization
of this principle is the dendroidal category $\Omega$ of Moerdijk and Weiss
\cite{MoerdijkWeiss2007}, whose objects are rooted trees and whose morphisms are maps between the
free operads generated by those trees. This is the main point of comparison with the present
work. Our category $\OrdFor$ likewise has arboreal flavour, but its morphisms are not operadic:
they are ordered forests with labelled boundary, and composition is defined by grafting. Thus
$\OrdFor$ is not a reformulation of $\Omega$, but a different arboreal indexing category with a
distinct morphism calculus.

A more explicitly order-theoretic approach to arboreal combinatorics appears in Weiss's broad
posets \cite{Weiss2012BroadPosets}, which show that tree-based indexing need not take operads as
primitive data. The present construction belongs to this direction, but with a different purpose:
rather than recasting operadic substitution in posetal language, it isolates a category of ordered
forests equipped with a canonical functor $\pi\colon \OrdFor\to\Delta_{\mathrm{epi}}^{op}$ linking
arboreal and linear indexing categories directly.

Berger's moment categories place $\Gamma$, $\Delta$, and dendroidal constructions in a common
formalism and emphasize comparison results among Segal-type presheaf theories
\cite{Berger2022}. Hackney studies graph-based indexing categories associated with generalized
operads and the corresponding presheaf theories \cite{Hackney2024}. Haderi and Ünlü work with
presheaves on levelled trees in a framework adapted to non-symmetric operads and simplicial-list
structures \cite{HaderiUnlu2024}. These approaches are close in spirit to the present one in that
they seek alternative indexing categories between the linear and operadic settings. What
distinguishes the present paper is the particular combination of features: ordered forests as
morphisms, grafting as composition, and a full functor
$\pi\colon \OrdFor\to\Delta_{\mathrm{epi}}^{op}$ through which the linear theory appears as a
shadow of the arboreal one.

The category $\OrdFor$ should also be compared with the binary magmatic formalism of
\cite{AskarogullariKaygun2024Leib}, where grafting operations of binary trees are encoded
algebraically by generators and relations. Here the same arboreal intuition is organized
categorically and extended from binary trees to arbitrary ordered forests. The homotopical
comparison proved here is also analogous to the phenomenon observed in \cite{KayaKaygun2024}: an
indexing category may carry additional combinatorial structure without changing the resulting
homotopy theory, provided that weak equivalences are detected by a suitable shadow functor. The
novelty of the present paper is to establish this for an arboreal extension of
$\Delta_{\mathrm{epi}}$: we construct the category $\OrdFor$ and the full functor
$\pi\colon \OrdFor\to\Delta_{\mathrm{epi}}^{op}$, and we show that when weak equivalences of
arboreal objects are detected by $\pi_\ast$, their homotopy theory agrees with that of
semisimplicial objects.

\subsection*{Plan of the article}

Section~\ref{sec:ordered-quivers} develops the basic combinatorics. We define ordered quivers,
ordered forests, and ordered trees in purely order-theoretic terms, describe their boundary strata,
and prove that every ordered forest decomposes canonically into an ordered disjoint union of
ordered trees.

Section~\ref{sec:OrdFor} constructs the category $\OrdFor$ of boundary-labelled ordered forests,
with composition given by grafting along maximal and minimal boundary strata.

Section~\ref{sec:height-one} identifies the linear shadow of the construction. We show that
height-one forests recover surjections on each hom-set and thereby obtain the full functor
$\pi\colon \OrdFor\to\Delta_{\mathrm{epi}}^{op}$.

Section~\ref{sec:arboreal-homotopy} studies arboreal objects
$\arb{\mathcal C}=\mathcal C^{\OrdFor^{op}}$.  We prove that localization at weak equivalences
detected by the right Kan extension $\pi_\ast$ agrees with the homotopy theory of semisimplicial
objects, and that any cofibrantly generated model structure on $\mathcal C^{\Delta_{\mathrm{epi}}}$
induces a right-transferred model structure on $\arb{\mathcal C}$ which is Quillen equivalent to
the semisimplicial one.

\section{Ordered Quivers, Forests, And Trees}\label{sec:ordered-quivers} 

\begin{definition}\label{def:ordered-quiver}
  An \emph{ordered quiver} is a triple $(P,\leq_P,\preceq_P)$ consisting of a finite set $P$, a
  partial order $\leq_P$, and a total order $\preceq_P$ such that $x\leq_P y$ implies
  $x\preceq_P y$. A morphism $f:(P,\leq_P,\preceq_P)\to(Q,\leq_Q,\preceq_Q)$ is a set map
  preserving both orders. We write $\Quiv$ for the resulting category.
\end{definition}

\subsection{Ordered forests and trees}

For a finite poset $(P,\leq_P)$ and $x\in P$, write $L_P(x):=\{y\in P\mid y\leq_P x\}$ and
$U_P(x):=\{y\in P\mid x\leq_P y\}$.

\begin{definition}\label{def:ordered-forest}
  Recall that a subset $I$ of a poset $(P,\leq)$ is called an interval if for every $a\leq b$ in
  $I$ we have $U_P(a)\cap L_P(b)\subseteq I$.  An \emph{ordered forest} is an ordered quiver
  $(P,\leq_P,\preceq_P)$ such that $L_P(x)$ is an interval in $(P,\preceq_P)$ for every $x\in
  P$. An \emph{ordered tree} is an ordered forest whose partial order has a unique maximal element.
\end{definition}

Let $\For\subseteq\Quiv$ denote the full subcategory spanned by the ordered forests.

\subsection{The decomposition theorem}\label{subsec:decomposition}

\begin{theorem}\label{thm:forest-decomposition}
  Let $(P,\leq_P,\preceq_P)$ be an ordered forest, and write $\max(P,\leq_P)=\{m_0,\dots,m_r\}$
  with $m_0\preceq_P\cdots\preceq_P m_r$.  Set $T_i:=L_P(m_i)$. Then:
  \begin{enumerate}[(i)]
  \item each $T_i$ is an ordered tree; \label{forest-i}
  \item the sets $T_0,\dots,T_r$ are pairwise disjoint; \label{forest-ii}
  \item $P=\bigcup_{i=0}^r T_i$; \label{forest-iii}
  \item each $T_i$ is an interval in $(P,\preceq_P)$; \label{forest-iv}
  \item $P$ is canonically isomorphic to the ordered disjoint union $T_0\oplus\cdots\oplus
    T_r$. \label{forest-v}
  \end{enumerate}
  In particular, every ordered forest decomposes canonically into an ordered disjoint union of
  ordered trees.
\end{theorem}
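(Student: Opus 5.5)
The whole argument rests on one elementary observation. If $x\in L_P(y)$, i.e.\ $x\leq_P y$, then $x\preceq_P y$ by Definition~\ref{def:ordered-quiver}. So the interval condition on $L_P(y)$ forces every element lying $\preceq_P$-between $x$ and $y$ to lie below $y$ in $\leq_P$. (Here "interval in $(P,\preceq_P)$" means convex for the total order.) I will prove the items in the order (iii), (ii), (i), (iv), (v).

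For (iii): $P$ is finite, so every $x\in P$ lies below some maximal element, hence in some $T_i$.

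For (ii), which I expect to be the crux: suppose $z\in T_i\cap T_j$ with $i<j$, so that $m_i\prec_P m_j$. From $z\leq_P m_i$ we get $z\preceq_P m_i$, so $z\preceq_P m_i\preceq_P m_j$. Since $z$ and $m_j$ both lie in $L_P(m_j)$, which is convex in $(P,\preceq_P)$, it follows that $m_i\in L_P(m_j)$, i.e.\ $m_i\leq_P m_j$. This contradicts the maximality of $m_i$, since $m_i\neq m_j$.

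For (i): give $T_i$ the restricted orders. Then $T_i$ has the unique maximal element $m_i$, because every element of $T_i$ is $\leq_P m_i$. For $x\in T_i$ we have $L_{T_i}(x)=L_P(x)$, since $L_P(x)\subseteq L_P(m_i)$ by transitivity. A subset that is convex in $P$ remains convex in $T_i$, so $T_i$ is an ordered forest, and hence an ordered tree.

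Item (iv) is immediate, since $T_i=L_P(m_i)$ is convex by the definition of ordered forest.

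For (v), two further facts are needed. First, there is no $\leq_P$-relation between distinct blocks: if $x\leq_P y$ with $y\in T_j$, then $x\leq_P m_j$, so $x\in T_j$. Thus $\leq_P$ is the disjoint union of the restricted partial orders. Second, by (ii)--(iv) the sets $T_i$ are pairwise disjoint convex subsets covering the total order $(P,\preceq_P)$. Such blocks are linearly ordered among themselves: all elements of one block precede all elements of the other. This block order must agree with the index order. Indeed, if $T_j$ preceded $T_i$ for some $i<j$, then $m_j\prec_P m_i$, contradicting the chosen enumeration. Hence $\preceq_P$ is exactly the ordinal-sum order of $T_0,\dots,T_r$.

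Taking the ordered disjoint union $T_0\oplus\cdots\oplus T_r$ to be the disjoint union of partial orders together with the concatenated total orders, the identity map of the underlying set is then an isomorphism in $\Quiv$. This isomorphism is canonical, because the $T_i$ and their indexing are determined by $\max(P,\leq_P)$ and $\preceq_P$.
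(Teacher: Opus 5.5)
Your proof is correct and follows essentially the same route as the paper. Your key step for (ii) uses $z\preceq_P m_i\preceq_P m_j$ and the convexity of $L_P(m_j)$ to force $m_i\leq_P m_j$, which is exactly the paper's argument that each point lies below a unique maximal element; the other items are handled as in the paper, with a little more detail in (i) ($L_{T_i}(x)=L_P(x)$) and (v) (disjoint convex blocks of a total order are linearly ordered).
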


\begin{proof}
  We first show that every $x\in P$ lies below a unique maximal element. Since $P$ is finite, the
  upper set $U_P(x)$ contains at least one maximal element, so there exists $m\in\max(P,\leq_P)$
  with $x\leq_P m$. For uniqueness, suppose $x\leq_P m$ and $x\leq_P m'$ with $m,m'$ maximal. After
  relabeling, assume $m\preceq_P m'$. Since $x,m'\in L_P(m')$ and $L_P(m')$ is an interval in
  $(P,\preceq_P)$, it follows that $m\in L_P(m')$. Hence $m\leq_P m'$, and therefore $m=m'$ by
  maximality.

  For \eqref{forest-i}, fix $i$. Since $T_i=L_P(m_i)$ and lower sets in an ordered forest are
  intervals, $T_i$ is again an ordered forest with the induced orders. Moreover, $m_i$ is maximal
  in $T_i$, and it is the unique maximal element: if $x\in T_i$ is maximal in $T_i$, then
  $x\leq_P m_i$, hence $x=m_i$. Thus $T_i$ is an ordered tree.

  Assertion \eqref{forest-ii} follows from uniqueness. Indeed, if $x\in T_i\cap T_j$, then
  $x\leq_P m_i$ and $x\leq_P m_j$, so the maximal element above $x$ is not unique unless $m_i=m_j$.
  Hence $i=j$.

  Assertion \eqref{forest-iii} follows from existence: every $x\in P$ lies below some maximal
  element $m_i$, so $x\in T_i$.

  Assertion \eqref{forest-iv} is immediate from the definition $T_i=L_P(m_i)$ and the defining
  property of an ordered forest.

  For \eqref{forest-v}, the sets $T_i$ form a partition of $P$ by \eqref{forest-ii} and
  \eqref{forest-iii}, and each $T_i$ is an interval in $(P,\preceq_P)$ by \eqref{forest-iv}. Since
  $m_0\preceq_P\cdots\preceq_P m_r$ and $m_i\in T_i$, these intervals occur in this order. Also,
  elements of distinct $T_i$ and $T_j$ are incomparable for $\leq_P$, since otherwise a point would
  lie below two distinct maximal elements, contradicting uniqueness. Thus the partial order on $P$
  is the disjoint union of the induced partial orders on the $T_i$, and the total order is their
  ordinal sum. Hence the identity map induces an isomorphism $P\cong T_0\oplus\cdots\oplus T_r$
  which proves \eqref{forest-v}.
\end{proof}
\section{The Cospan Category $\OrdFor$}\label{sec:OrdFor}

We define the category $\OrdFor$ of ordered forests with labelled boundary. Its composition is
based on a restricted cospan composition, refined by a reduction step to ensure finite
hom-sets. Compare \cite{Lack2004,RosebrughSabadiniWalters2005} for the general cospan formalism.

\begin{definition}\label{def:height}
  The height of an ordered forest $(F,\leq_F,\preceq_F)$ is the maximum length of a chain
  $x_0<_F \cdots<_F x_n$ in the underlying poset $(F,\leq_F)$.
\end{definition}

An ordered tree is of \emph{height-one} if every element is either minimal or
maximal. Equivalently, it has a unique maximal element and every other element is minimal. An
ordered forest is called \emph{height-one} if each of its ordered tree components is height-one.

An ordered forest is of \emph{height-zero} if it is a finite totally ordered set endowed with the
trivial partial order $x\leq_F y$ iff $x=y$. For each $n\geq 0$, let $[n]:=\{0,1,\dots,n\}$ with
its natural total order and the trivial partial order. We regard $[n]$ as a height-zero forest.

\begin{definition}\label{def:boundary-labelled-forest}
  Let $m,n\geq 0$. A \emph{boundary-labelled ordered forest from $[m]$ to $[n]$} is a cospan
  \[
    [m]\xrightarrow{\ \iota^{+}\ } F \xleftarrow{\ \iota^{-}\ } [n]
  \]
  in $\Quiv$ such that $F$ is an ordered forest, $\iota^{+}$ identifies $[m]$ with
  $\max(F,\leq_F)$, and $\iota^{-}$ identifies $[n]$ with $\min(F,\leq_F)$, both with the total
  orders induced by $\preceq_F$.
\end{definition}

\begin{definition}\label{def:isomorphism-cospans}
  Two boundary-labelled ordered forests
  $[m]\xrightarrow{\ \iota^{+}\ } F \xleftarrow{\ \iota^{-}\ } [n]$ and
  $[m]\xrightarrow{\ \jmath^{+}\ } F' \xleftarrow{\ \jmath^{-}\ } [n]$ are \emph{isomorphic} if
  there is an isomorphism $\alpha:F\to F'$ in $\Quiv$ such that the following diagram commutes
 \[\xymatrix{
      & F \ar@{..>}[dd]_{\alpha} \\
      [m] \ar[ur]^{\iota^+} \ar[dr]_{\jmath^+} & & \ar[dl]^{\jmath^-} \ar[ul]_{\iota^-} [n]\\
      & F'
    }\]
  In other words, $\alpha\iota^{+}=\jmath^{+}$ and $\alpha\iota^{-}=\jmath^{-}$.
\end{definition}

\begin{remark}\label{rem:direction}
  Our convention is that the domain is the ordered set of maximal elements and the codomain is the
  ordered set of minimal elements. With this choice, the linear shadow constructed later is a
  functor of the form $\pi\colon \OrdFor\to\Delta_{\mathrm{epi}}^{op}$.
\end{remark}

To build the composition in $\OrdFor$, we first define an intermediate grafting operation for
boundary-labelled forests.

\begin{definition}\label{def:composition}
  Let $[\ell]\xrightarrow{\ \iota_G^{+}\ } G \xleftarrow{\ \iota_G^{-}\ } [m]$ and
  $[m]\xrightarrow{\ \iota_F^{+}\ } F \xleftarrow{\ \iota_F^{-}\ } [n]$ be composable
  boundary-labelled ordered forests. For $i\in[m]$, write $x_i:=\iota_F^{+}(i)\in\max(F)$,
  $y_i:=\iota_G^{-}(i)\in\min(G)$, and $F_i:=L_F(x_i)$. By Theorem~\ref{thm:forest-decomposition},
  $F\cong F_0\oplus\cdots\oplus F_m$, and each $F_i$ is an interval in $(F,\preceq_F)$.

  Define the raw graft $H:=F\cdot G$ by replacing each $y_i\in\min(G)$ by the tree
  $F_i$. Concretely, $H=(G\setminus\min(G))\sqcup F$. Let $j_F:F\hookrightarrow H$ be the
  inclusion, and define $j_G:G\to H$ by $j_G(y_i)=x_i$ and $j_G(z)=z$ for $z\in
  G\setminus\min(G)$. Define also $\pi:H\to G$ by $\pi(a)=y_i$ for $a\in F_i$ and $\pi(a)=a$ for
  $a\in G\setminus\min(G)$.

  The total order on $H$ is defined by $a\preceq_H b$ if either $\pi(a)\prec_G\pi(b)$, or
  $\pi(a)=\pi(b)=y_i$ for some $i$ and $a\preceq_F b$, or $\pi(a)=\pi(b)\in G\setminus\min(G)$ and
  $a=b$.

  The partial order on $H$ is defined by:
  \begin{enumerate}[(i)]
  \item if $a,b\in F_i$, then $a\leq_H b$ iff $a\leq_F b$;
  \item if $a,b\in G\setminus\min(G)$, then $a\leq_H b$ iff $a\leq_G b$;
  \item if $a\in F_i$ and $b\in G\setminus\min(G)$, then $a\leq_H b$ iff $y_i\leq_G b$;
  \item no other pairs are comparable.
  \end{enumerate}
  The induced maps $[\ell]\xrightarrow{j_G\iota_G^{+}}H$ and $[n]\xrightarrow{j_F\iota_F^{-}}H$ are the boundary labels of the raw composite.
\end{definition}

\begin{proposition}\label{prop:composition-ordered-forest}
  The raw composite $H=F\cdot G$ of Definition~\ref{def:composition} is an ordered forest. Its
  maximal elements are $j_G(\max(G))$, its minimal elements are $j_F(\min(F))$, and hence it
  defines a boundary-labelled ordered forest from $[\ell]$ to $[n]$.
\end{proposition}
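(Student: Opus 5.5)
We verify in turn the axioms of Definition~\ref{def:ordered-forest} for $H$, using the projection $\pi\colon H\to G$ throughout. First, $\preceq_H$ is a total order. It is the lexicographic order on $H$ obtained by pulling back $\preceq_G$ along $\pi$ and refining each fibre. The fibres of $\pi$ are the trees $F_i$ (ordered by $\preceq_F$) and singletons.

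Second, $\leq_H$ is a partial order. Reflexivity and antisymmetry are clear case by case. Transitivity needs a check only for mixed chains:
\begin{itemize}
\item For $a\leq_H b\leq_H c$ with $a,b\in F_i$ and $c\in G\setminus\min(G)$, we get $y_i\leq_G c$ directly, so $a\leq_H c$.
\item For $a\in F_i$ and $b,c\in G\setminus\min(G)$, we use $y_i\leq_G b\leq_G c$.
\item Chains from $G\setminus\min(G)$ down into $F$ cannot occur, by clause (iv).
\end{itemize}
Compatibility ($a\leq_H b\Rightarrow a\preceq_H b$) is immediate inside $F_i$ and inside $G\setminus\min(G)$. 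In the mixed case $a\in F_i$, $b\notin\min G$, we have $y_i\leq_G b$ with $y_i\neq b$, so $y_i\prec_G b$. Since $\pi(a)=y_i$, this gives $a\prec_H b$.

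Next we identify the extremal elements. An element $b\in G\setminus\min(G)$ is maximal in $H$ iff it is maximal in $G$, because only elements of $G\setminus\min(G)$ lie above it. An element $a\in F_i$ is maximal in $H$ iff nothing in $F_i$ lies above it and $y_i$ is maximal in $G$. This gives exactly $a=x_i$ with $y_i\in\max(G)$, that is, $j_G$ of a maximal element of $G$ lying in $\min(G)$. Hence $\max(H)=j_G(\max G)$. Dually, elements of $G\setminus\min(G)$ are never minimal in $H$. Indeed, such a $b$ has some $y_i<_G b$, and then $x_i<_H b$. Elements of $F_i$ are minimal in $H$ iff they are minimal in $F$. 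So $\min(H)=j_F(\min F)$. We then check that $j_G$ and $j_F$ preserve both orders, so the boundary labels are morphisms in $\Quiv$ and the induced total orders match.

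The main obstacle is the interval condition: $L_H(z)$ must be an interval in $(H,\preceq_H)$ for every $z$. If $z\in F_i$, then $L_H(z)=L_F(z)\subseteq F_i$. This set is an interval in $F$, and $F_i$ is a contiguous block of $H$ containing it, so $L_H(z)$ is an interval in $H$. If $z\in G\setminus\min(G)$, we write
\[
  L_H(z)=\pi^{-1}\bigl(L_G(z)\bigr)\cap\bigl(\{z\}\cup (L_G(z)\setminus\{z\})\bigr),
\]
and more precisely claim $L_H(z)=\pi^{-1}(L_G(z))$. Every $a\in F_i$ with $y_i\leq_G z$ satisfies $a\leq_F x_i$ and hence $a\leq_H z$. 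So $L_H(z)$ is the full preimage of the interval $L_G(z)$ under the monotone map $\pi$. Since $\preceq_H$ is lexicographic over $\pi$ and the fibres are blocks, the preimage of a $\preceq_G$-interval is a $\preceq_H$-interval: if $a\preceq_H c\preceq_H b$, then $\pi(a)\preceq_G\pi(c)\preceq_G\pi(b)$.

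The only subtle point is the claim that the whole tree $F_i$ lies below $z$. This uses that $F_i=L_F(x_i)$ is the entire component, which holds by Theorem~\ref{thm:forest-decomposition}.
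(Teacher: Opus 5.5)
Your proof is correct and follows the same route as the paper. Both arguments show that $\preceq_H$ is the lexicographic total order over $\pi$, check the partial order and its compatibility with $\preceq_H$ case by case, and get the interval condition by identifying $L_H(z)$ with $L_F(z)$ or with $\pi^{-1}(L_G(z))$. They then read off the extremal elements; you do this with more care than the paper, for instance noting that $y_i\prec_G b$ holds strictly in the mixed case.

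The one blemish is your first displayed formula for $L_H(z)$. As written it is garbled: intersecting with $\{z\}\cup(L_G(z)\setminus\{z\})=L_G(z)$ would throw away the inserted trees $F_i$. Replace it with the claim $L_H(z)=\pi^{-1}(L_G(z))$, which you then prove correctly.
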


\begin{proof}
  By construction, $\preceq_H$ is a total order. The relation $\leq_H$ is a partial order, as the
  only nontrivial cases are mixed ones, and these reduce immediately to transitivity in $G$. Also
  $a\leq_H b$ implies $a\preceq_H b$, since this is clear inside each $F_i$ and inside
  $G\setminus\min(G)$, while in the mixed case $a\in F_i$, $b\in G\setminus\min(G)$ one has
  $y_i\leq_G b$, hence $y_i\preceq_G b$. Now let $u\in H$. If $u\in F_i$, then $L_H(u)=L_F(u)$, so
  $L_H(u)$ is an interval. If $u\in G\setminus\min(G)$, then $L_H(u)=\pi^{-1}(L_G(u))$, which is
  again an interval, since $L_G(u)$ is an interval in $G$ and each fibre $\pi^{-1}(y_i)=F_i$ is
  inserted at the position of $y_i$. Thus $H$ is an ordered forest. Finally, the maximal elements
  of $H$ are exactly the maximal elements of $G$, hence $j_G(\max(G))$, and the minimal elements
  are exactly the minimal elements of the inserted trees $F_i$, hence $j_F(\min(F))$.
\end{proof}

\begin{definition}\label{def:boundary-compatible-cocone}
  Let $F \xleftarrow{\ \iota_F^{+}\ } [m] \xrightarrow{\ \iota_G^{-}\ } G$ be the span underlying a
  composable pair, and write $x_i:=\iota_F^{+}(i)$, $y_i:=\iota_G^{-}(i)$, and $F_i:=L_F(x_i)$. A
  \emph{boundary-compatible cocone} from this span to an ordered forest $Q$ is a pair of morphisms
  $f:F\to Q$ and $g:G\to Q$ in $\For$ such that $f\iota_F^{+}=g\iota_G^{-}$ and, for every
  $i\in[m]$, every $z\in G$ with $z\prec_G y_i$, and every $a\in F_i$, one has
  $g(z)\preceq_Q f(a)$.
\end{definition}

\begin{proposition}\label{prop:composition-universal}
  Let $[\ell]\xrightarrow{\ \iota_G^{+}\ } G \xleftarrow{\ \iota_G^{-}\ } [m]$ and
  $[m]\xrightarrow{\ \iota_F^{+}\ } F \xleftarrow{\ \iota_F^{-}\ } [n]$ be composable, and let
  $H=F\cdot G$. Then
  \[\begin{CD}
      [m] @>{\iota_G^{-}}>> G\\
      @V{\iota_F^{+}}VV @VV{j_G}V\\
      F @>{j_F}>> H
    \end{CD}\]\\
  is universal among boundary-compatible cocones. Equivalently, every boundary-compatible cocone
  $f:F\to Q$, $g:G\to Q$ factors uniquely through a morphism $u:H\to Q$ in $\For$ with $uj_F=f$ and
  $uj_G=g$.
\end{proposition}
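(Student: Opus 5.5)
The plan is to construct $u$ explicitly and to observe that uniqueness is forced. First I will check that $(j_F,j_G)$ is itself a boundary-compatible cocone.

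\emph{The square is a boundary-compatible cocone.} Both composites in the square send $i\in[m]$ to $x_i$, so it commutes. The map $j_F$ is order preserving by clause (i) of the partial order and the definition of $\preceq_H$ on fibres. For $j_G$:
\begin{itemize}
\item If $y_i\leq_G b$ with $b\notin\min(G)$, then $x_i\leq_H b$ by clause (iii).
\item Comparisons among the $y_i$ are trivial, since $\min(G)$ is an antichain.
\item The total order is respected because $\pi j_G=\mathrm{id}_G$ and $\preceq_H$ is defined through $\pi$.
\end{itemize}
The compatibility condition holds: if $z\prec_G y_i$ and $a\in F_i$, then $\pi(j_G z)=z\prec_G y_i=\pi(a)$, so $j_G(z)\preceq_H a$. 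Both maps land in $\For$ by Proposition~\ref{prop:composition-ordered-forest}.

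\emph{Uniqueness and definition of $u$.} As a set, $H=j_F(F)\cup j_G(G\setminus\min(G))$, so any $u$ with $uj_F=f$ and $uj_G=g$ is determined. The only overlap to check is $j_G(y_i)=x_i=j_F(x_i)$, and there $f(x_i)=f\iota_F^{+}(i)=g\iota_G^{-}(i)=g(y_i)$. So we set $u|_F=f$ and $u|_{G\setminus\min(G)}=g$, and this satisfies both equations. Since $\For$ is full in $\Quiv$, it remains only to show that $u$ preserves $\leq$ and $\preceq$.

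\emph{Partial order.} I follow the clauses of Definition~\ref{def:composition}. Clauses (i) and (ii) hold because $f$ and $g$ are monotone. In the mixed clause (iii), $a\in F_i$ and $y_i\leq_G b$. Since $a\leq_F x_i$, we get
\[
f(a)\leq_Q f(x_i)=g(y_i)\leq_Q g(b).
\]

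\emph{Total order.} Same-fibre cases are immediate. Suppose $\pi(a)\prec_G\pi(b)$; there are four cases.
\begin{itemize}
\item $a\in F_i$, $b\in F_j$. Then $y_i\prec_G y_j$, and since $\iota_G^-$ is order preserving, $i<j$. By Theorem~\ref{thm:forest-decomposition} the block $F_i$ precedes $F_j$ in $\preceq_F$, so $f(a)\preceq_Q f(b)$.
\item $a,b\in G\setminus\min(G)$. Use that $g$ preserves $\preceq$.
\item $a\in F_i$, $b\in G\setminus\min(G)$ with $y_i\prec_G b$. From $a\leq_F x_i$ we get $a\preceq_F x_i$, hence
\[
f(a)\preceq_Q f(x_i)=g(y_i)\preceq_Q g(b).
\]
\item $a\in G\setminus\min(G)$, $b\in F_j$ with $a\prec_G y_j$. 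This is the main obstacle: the element $b$ of the grafted tree may lie far below $x_j$, and nothing in $f$ or $g$ alone forces $g(a)\preceq_Q f(b)$. This is exactly what the boundary-compatibility hypothesis supplies, with $z=a$ and $i=j$.
\end{itemize}
Hence $u$ is a morphism of $\For$, which proves existence and completes the argument.
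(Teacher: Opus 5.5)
Your proof is correct and follows the paper's argument: define $u$ piecewise by $f$ on $F$ and $g$ on $G\setminus\min(G)$, get well-definedness from $f(x_i)=g(y_i)$, and check the mixed cases, with boundary-compatibility covering exactly the case $z\prec_G y_i$. You go slightly further than the paper by also checking that $(j_F,j_G)$ is itself a boundary-compatible cocone, and that $\preceq_H$ restricts to $\preceq_F$ across distinct blocks $F_i,F_j$; the paper leaves both points implicit.
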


\begin{proof}
  Define $u:H\to Q$ by $u(a)=f(a)$ for $a\in F$ and $u(a)=g(a)$ for $a\in G\setminus\min(G)$. This
  is well defined because the only identifications are $j_G(y_i)=x_i$, and there one has
  $g(y_i)=f(x_i)$. It remains to check that $u$ is a morphism in $\For$. The only mixed
  partial-order case is $a\in F_i$, $b\in G\setminus\min(G)$ with $a\leq_H b$. Then $y_i\leq_G b$,
  so $f(a)\leq_Q f(x_i)=g(y_i)\leq_Q g(b)$. For the total order, the only mixed cases are again
  $a\in F_i$, $b\in G\setminus\min(G)$. If $a\preceq_H b$, then
  $f(a)\preceq_Q f(x_i)=g(y_i)\preceq_Q g(b)$. If $b\preceq_H a$, then necessarily $b\prec_G y_i$,
  and boundary-compatibility gives $g(b)\preceq_Q f(a)$. Thus $u$ preserves both orders, and
  uniqueness is immediate.
\end{proof}

While the raw grafting operation $F \cdot G$ is perfectly well-behaved, defining the morphisms of
$\OrdFor$ to be all possible boundary-labelled forests would result in infinite hom-sets due to the
possibility of unbounded non-branching chains. To ensure the category is structurally bounded and
suitable for homotopical transfer via right Kan extension, we rigorously restrict our morphisms to
reduced forests.

\begin{definition}\label{def:unary-vertex}
  Let $(F,\leq_F,\preceq_F)$ be an ordered forest. An element $v \in F$ is called an \emph{internal
    vertex} if $v \notin \min(F,\leq_F) \cup \max(F,\leq_F)$. An internal vertex $v$ is
  \emph{unary} if it has exactly one immediate predecessor and exactly one immediate successor in
  the partial order $\leq_F$. An ordered forest is \emph{reduced} if it contains no unary internal
  vertices.
\end{definition}

\begin{definition}\label{def:reduction}
  Let $H$ be an ordered forest. The \emph{reduction} of $H$, denoted $\mathrm{Red}(H)$, is the
  ordered forest obtained by iteratively contracting all unary internal vertices. Explicitly, if
  $v \in H$ is a unary internal vertex with immediate predecessor $u$ and immediate successor $w$,
  we remove $v$ from the underlying set and replace the covering relations $u <_H v$ and $v <_H w$
  with a single covering relation $u <_H w$, while inheriting the total order $\preceq_H$ on the
  remaining elements.
\end{definition}

The reduction operation iteratively collapses unary internal vertices, thus ensure that every
internal vertex in a reduced forest has a branching factor of at least two. Since the number of
leaves and roots is strictly determined by the domain and codomain boundaries, the branching
condition guarantees an upper bound on the total number of internal vertices. Consequently, the
number of isomorphism classes of reduced forests between any two boundaries is finite, i.e. the
hom-sets in $\OrdFor$ finite.

\begin{definition}\label{def:OrdFor-morphisms}
  The objects of $\OrdFor$ are the finite ordinals $[n]$, viewed as height-zero ordered forests. A
  morphism $[m]\to[n]$ in $\OrdFor$ is an isomorphism class of \emph{reduced} boundary-labelled
  ordered forests $[m]\xrightarrow{\ \iota^{+}\ } F \xleftarrow{\ \iota^{-}\ } [n]$.
\end{definition}

\begin{definition}\label{def:composition-reduced}
  Let $[\ell]\xrightarrow{\ \iota_G^{+}\ } G \xleftarrow{\ \iota_G^{-}\ } [m]$ and
  $[m]\xrightarrow{\ \iota_F^{+}\ } F \xleftarrow{\ \iota_F^{-}\ } [n]$ be composable
  morphisms. Their categorical composite is defined as $F \circ G := \mathrm{Red}(F \cdot G)$,
  where $F \cdot G$ is the raw grafting construction.
\end{definition}

\begin{definition}\label{def:identities}
  For each $n\geq 0$, the identity morphism of $[n]$ is represented by the trivially reduced
  height-zero forest $[n]\xrightarrow{\ id\ } [n] \xleftarrow{\ id\ } [n]$.
\end{definition}

\begin{theorem}\label{thm:OrdFor-category}
  The objects and morphisms defined above, equipped with the composition $F \circ G$, form a well-defined category $\OrdFor$. Furthermore, for any objects $[m]$ and $[n]$, the hom-set $\Hom_{\OrdFor}([m], [n])$ is finite.
\end{theorem}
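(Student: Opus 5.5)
We need to check four things in turn: that reduction is well defined, that composition is well defined on isomorphism classes, that identities and associativity hold, and that hom-sets are finite.

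\emph{Reduction.} First we show that $\mathrm{Red}(H)$ is well defined and is again an ordered forest with the same maximal and minimal strata. Contracting a single unary internal vertex $v$ amounts to deleting $v$ from $H$ and keeping the restricted orders $\leq_H$ and $\preceq_H$ on $H\setminus\{v\}$. The restriction of a partial order already contains the new covering relation $u<w$, so nothing has to be added. For the interval condition, take $x\neq v$. Then $L_{H\setminus\{v\}}(x)=L_H(x)\setminus\{v\}$. If $v\in L_H(x)$, then $v$ lies strictly between $u$ and $w$ in $L_H(x)$ in the sense of $\preceq_H$, and removing an element from a totally ordered set that is then re-indexed by the induced order keeps intervals intervals. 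Since $v$ is internal, the sets $\max$ and $\min$ are unchanged, so the boundary labels survive.

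We then need independence of the order of contractions. The set of unary internal vertices only grows under contraction: deleting $v$ never changes the number of immediate predecessors or successors of any other vertex. Hence the reduction simply deletes the set $U(H)$ of all vertices that become unary along some contraction sequence. This set is characterized intrinsically as the internal vertices lying in maximal non-branching chains, so $\mathrm{Red}(H)$ is canonical and $\mathrm{Red}\circ\mathrm{Red}=\mathrm{Red}$.

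\emph{Composition on isomorphism classes.} Isomorphisms $\alpha\colon F\to F'$ and $\beta\colon G\to G'$ of boundary-labelled forests induce an isomorphism $F\cdot G\cong F'\cdot G'$ that is compatible with the boundary labels. One can check this directly from the explicit orders in Definition~\ref{def:composition}, or derive it from the universal property in Proposition~\ref{prop:composition-universal}, since isomorphisms preserve boundary-compatible cocones. Reduction is canonical, so it commutes with isomorphisms, and composition descends to isomorphism classes.

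\emph{Identities.} Grafting the height-zero forest $[m]$ with identity labels replaces each $y_i$ by a single point, so $F\cdot[m]\cong F$ on the nose, and likewise $[m]\cdot G\cong G$. Both are already reduced.

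\emph{Associativity.} This is the main obstacle, and we would split it into two steps.

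First, we prove raw associativity: $(F\cdot G)\cdot K\cong F\cdot(G\cdot K)$. Both sides have underlying set $(K\setminus\min K)\sqcup(G\setminus\min G)\sqcup F$. We compare the partial orders case by case; the mixed relations all reduce to $y_i\leq b$ through transitivity. We compare the total orders as iterated lexicographic insertions. Alternatively, both sides corepresent the same iterated boundary-compatible cocones, by Proposition~\ref{prop:composition-universal}.

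Second, we need the key lemma $\mathrm{Red}(\mathrm{Red}(A)\cdot B)=\mathrm{Red}(A\cdot B)$, together with its mirror image. The idea is that grafting only glues along boundary strata and never alters internal vertices. So a vertex that is unary internal in $A$ remains unary internal in $A\cdot B$. Contracting it before or after grafting therefore gives the same result, and uniqueness of the reduction gives the equality. The one delicate point is that new unary vertices can appear at the gluing points: a maximal element $x_i$ of $F$ becomes internal in $F\cdot G$, and it is unary if $y_i$ has a single successor and $F_i$ has a single predecessor of $x_i$. These vertices are handled by the same confluence argument, since reduction removes them regardless of the order in which things are done. Applying this twice gives
\[
(F\circ G)\circ K=\mathrm{Red}((F\cdot G)\cdot K)\cong\mathrm{Red}(F\cdot(G\cdot K))=F\circ(G\circ K).
\]

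\emph{Finiteness.} Let $F$ be a reduced forest from $[m]$ to $[n]$, so it has $m+1$ maximal and $n+1$ minimal elements. We bound its size by induction on height, using Theorem~\ref{thm:forest-decomposition} on each tree. Each internal vertex is not unary, so it has at least two immediate predecessors or at least two immediate successors. In a forest, an element below a single maximal element has one "up" direction locally; we would use this to argue that internal vertices must branch downward. The count then works as in rooted trees: the number of branching vertices is less than the number of leaves. This bounds $|F|$ by a function of $m$ and $n$.

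Once $|F|$ is bounded, there are only finitely many pairs of orders on a set of bounded size, so only finitely many isomorphism classes, and $\Hom_{\OrdFor}([m],[n])$ is finite. If the downward-branching claim fails, for instance because an element has several immediate successors, we would instead bound chain lengths directly, using the interval condition together with reducedness.
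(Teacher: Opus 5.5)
Your proposal follows the same route as the paper: raw associativity of grafting, uniqueness of the reduced normal form, and a branching count for finiteness. On associativity it is more explicit than the paper, since you isolate the lemma $\mathrm{Red}(\mathrm{Red}(A)\cdot B)=\mathrm{Red}(A\cdot B)$ that the paper's appeal to the Diamond Lemma leaves implicit. One small correction there: contracting $v$ leaves the unary status of every other vertex unchanged, so the set of unary vertices does not grow. Thus $\mathrm{Red}(H)$ is simply $H$ minus its unary internal vertices, with the restricted orders, and this makes your key lemma immediate.

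The step that is not yet an argument is the downward branching in the finiteness part. Lying below a unique maximal element does not by itself give a unique immediate successor. In a general poset, $a<b<m$, $a<c<m$ with $b,c$ incomparable is a counterexample. Your fallback about bounding chain lengths is too vague to replace this.

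What you need is that $U_P(x)$ is a $\leq_P$-chain for every $x$. This follows from the interval condition, exactly as in the uniqueness step of Theorem~\ref{thm:forest-decomposition}. If $x\leq_P b$, $x\leq_P c$ and $b\preceq_P c$, then $x\preceq_P b\preceq_P c$ with $x,c\in L_P(c)$, so $b\in L_P(c)$, i.e.\ $b\leq_P c$.

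Hence every non-maximal element has exactly one immediate successor, and every internal vertex of a reduced forest has at least two immediate predecessors. Counting covering relations tree by tree then gives at most $(\#\text{leaves})-1$ internal vertices per tree. So there are at most $n-m$ internal vertices in total, and $|F|\leq 2n+2$. With this lemma inserted your proof is complete. The paper's phrase ``upward or downward'' tacitly relies on the same fact, since upward branching would have to be excluded for its count to work.
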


\begin{proof}
  The identity cospan acts trivially under the grafting rule and is already reduced, naturally
  satisfying the unit axioms. To establish associativity, let $[r]\to K\leftarrow[\ell]$,
  $[\ell]\to G\leftarrow[m]$, and $[m]\to F\leftarrow[n]$ be composable morphisms. The raw grafting
  operation on ordered forests is strictly associative up to canonical isomorphism. The contraction
  process is a locally confluent and terminating rewriting system. By the Diamond Lemma, the
  reduction yields a unique reduced normal form independent of the order of vertex contractions are
  executed. Consequently, the reduction after each grafting step yields a canonically isomorphic
  result ensuring the associativity $(F \circ G) \circ K \cong F \circ (G \circ K)$. For the
  finiteness assertion, observe that every internal vertex in a reduced boundary-labelled forest
  must have a branching factor of at least two in either the upward or downward direction, and
  therefore, the total number of internal vertices is globally bounded by a function depending only
  on $m$ and $n$. Thus the hom-set between any two objects is strictly finite.
\end{proof}

\section{Height-one Forests}\label{sec:height-one}

We now identify the linear shadow of $\OrdFor$ by showing that height-one forests encode exactly
the surjections in $\Delta$.  Recall that the category $\Delta_{\mathrm{epi}}$ has as objects the
finite ordinals $[n]=\{0,\dots,n\}$ and as morphisms the order-preserving surjections.

\begin{construction}\label{cons:surjection-to-height-one}
  Let $\sigma\colon [n]\twoheadrightarrow[m]$ be an order-preserving surjection. For each
  $0\leq i\leq m$, let $B_i=\sigma^{-1}(i)$. Since $\sigma$ is order-preserving and surjective, each
  $B_i$ is a nonempty interval in $[n]$, and $[n]$ is the ordered disjoint union
  $B_0\oplus\cdots\oplus B_m$.

  For each $i\in[m]$, let $T_i$ be the height-one ordered tree whose unique maximal element is
  denoted $r_i$ and whose minimal elements are indexed, in their induced order, by the elements of
  $B_i$. Set
  \[
    F_\sigma:=T_0\oplus\cdots\oplus T_m.
  \]
  The ordered set of maximal elements of $F_\sigma$ is canonically identified with $[m]$ via
  $i\mapsto r_i$, and the ordered set of minimal elements is canonically identified with
  $[n]$. Hence $F_\sigma$ defines a morphism $[m]\to[n]$ in $\OrdFor$.
\end{construction}

\begin{construction}\label{cons:forest-to-shadow}
  Let $[m]\xrightarrow{\ \iota^{+}\ } F \xleftarrow{\ \iota^{-}\ } [n]$ be a boundary-labelled
  ordered forest. Write
  \[
    \iota^{+}(0)=r_0\prec_F\cdots\prec_F r_m
  \]
  for the maximal elements of $F$, and
  \[
    \iota^{-}(0)\prec_F\cdots\prec_F \iota^{-}(n)
  \]
  for the minimal elements of $F$. By Theorem~\ref{thm:forest-decomposition} we have a canonical
  decomposition
  \[
    F\cong L_F(r_0)\oplus\cdots\oplus L_F(r_m).
  \]
  Define a map $\sigma_F\colon [n]\to[m]$ by the rule
  \[
    \sigma_F(j)=i
    \qquad\Longleftrightarrow\qquad
    \iota^{-}(j)\in L_F(r_i),
  \]
  equivalently,
  \[
    \sigma_F(j)=i
    \qquad\Longleftrightarrow\qquad
    \iota^{-}(j)\leq_F r_i.
  \]

  Equivalently, one may form the full subposet
  \[
    H_F:=\min(F,\leq_F)\cup\max(F,\leq_F)\subseteq F
  \]
  with the induced partial order and total order. Then $H_F$ is a height-one ordered forest with
  the same boundary labels as $F$, and $\sigma_F$ is precisely the surjection associated to $H_F$.
\end{construction}

\begin{lemma}\label{lem:forest-shadow-surjection}
  For every boundary-labelled ordered forest $F$, the map $\sigma_F$ of
  Construction~\ref{cons:forest-to-shadow} is an order-preserving surjection.
\end{lemma}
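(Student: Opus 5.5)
We need three things: that $\sigma_F$ is well defined as a function, that it is order-preserving, and that it is surjective. All three will come from Theorem~\ref{thm:forest-decomposition} applied to $F$, with its maximal elements $r_0\prec_F\cdots\prec_F r_m$.

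\emph{Well-definedness.} By Theorem~\ref{thm:forest-decomposition}\eqref{forest-ii} and \eqref{forest-iii}, the trees $L_F(r_0),\dots,L_F(r_m)$ partition $F$. So each minimal element $\iota^{-}(j)$ lies in exactly one $L_F(r_i)$, and $\sigma_F(j)$ is a well-defined element of $[m]$. The two descriptions in Construction~\ref{cons:forest-to-shadow} agree by the definition $L_F(r_i)=\{y\mid y\leq_F r_i\}$.

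\emph{Surjectivity.} Fix $i\in[m]$. The set $L_F(r_i)$ is finite and nonempty, since it contains $r_i$. Hence it contains a $\leq_F$-minimal element $z$. Any $y\leq_F z$ satisfies $y\leq_F r_i$, so $y\in L_F(r_i)$, and minimality of $z$ there forces $y=z$. Thus $z\in\min(F,\leq_F)$, so $z=\iota^{-}(j)$ for some $j\in[n]$, because $\iota^{-}$ identifies $[n]$ with $\min(F)$. Then $\sigma_F(j)=i$.

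\emph{Monotonicity.} Take $j<j'$ in $[n]$. Then $\iota^{-}(j)\prec_F\iota^{-}(j')$, since $\iota^{-}$ is an order isomorphism onto $\min(F)$ with the induced total order. Write $\sigma_F(j)=i$ and $\sigma_F(j')=i'$, and suppose for contradiction that $i'<i$. By Theorem~\ref{thm:forest-decomposition}\eqref{forest-iv} and \eqref{forest-v}, each $L_F(r_k)$ is a $\preceq_F$-interval, and these intervals appear in the order of $r_0\prec_F\cdots\prec_F r_m$. So every element of $L_F(r_{i'})$ precedes every element of $L_F(r_i)$ in $\preceq_F$. In particular $\iota^{-}(j')\prec_F\iota^{-}(j)$, a contradiction. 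Hence $i\le i'$.

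The only delicate point is this last step. It relies on the claim in part \eqref{forest-v} that the partition into convex blocks is ordered by the positions of the roots. I would make that explicit: if $a\in T_{i'}$ and $b\in T_i$ with $i'<i$ but $b\prec_F a$, then $b\prec_F a\preceq_F r_{i'}\prec_F r_i$. Since $b$ and $r_i$ both lie in the interval $T_i$, convexity puts $a$ in $T_i$, contradicting disjointness. Here we also use $a\preceq_F r_{i'}$, which follows from $a\leq_F r_{i'}$ and Definition~\ref{def:ordered-quiver}.
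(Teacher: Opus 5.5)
Your proof is correct and has the same outline as the paper's. Surjectivity comes from a minimal element lying below each root, and monotonicity comes from each $L_F(r_i)$ being a $\preceq_F$-interval.

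The two arguments differ in the monotonicity step, and the difference favours you. The paper shows only that each fibre of $\sigma_F$ is an interval in $[n]$, and then concludes that $\sigma_F$ is order-preserving. That inference does not hold on its own: the map $[1]\to[1]$ with $0\mapsto 1$, $1\mapsto 0$ has interval fibres but is not order-preserving. What is actually needed is that the blocks $L_F(r_0),\dots,L_F(r_m)$ occur in $\preceq_F$ in the same order as their roots $r_0\prec_F\cdots\prec_F r_m$. You invoke exactly this fact from part (v) of Theorem~\ref{thm:forest-decomposition}, and you also verify it directly from convexity and disjointness. So your argument is complete at the point where the paper's omits a step.

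Your surjectivity argument also states something the paper leaves implicit: a minimal element of the down-set $L_F(r_i)$ is minimal in $F$, and hence lies in the image of $\iota^{-}$. Your separate well-definedness check, via the partition in parts (ii) and (iii), is likewise left implicit in the paper.
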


\begin{proof}
  Every maximal element $r_i$ lies in a unique ordered-tree component of $F$, and every finite
  ordered tree has at least one minimal element. Hence for each $i$ there exists some $j\in[n]$
  with $\iota^{-}(j)\leq_F r_i$, so $\sigma_F$ is surjective.

  By Theorem~\ref{thm:forest-decomposition}, the lower set $L_F(r_i)$ is an interval in the ambient
  total order on $F$. Therefore, if
  \[
    \iota^{-}(a)\prec_F \iota^{-}(b)\prec_F \iota^{-}(c)
  \]
  and $\sigma_F(a)=\sigma_F(c)=i$, then both $\iota^{-}(a)$ and $\iota^{-}(c)$ lie in the interval
  $L_F(r_i)$, hence so does $\iota^{-}(b)$. Thus $\sigma_F(b)=i$. It follows that each fibre of
  $\sigma_F$ is an interval in $[n]$, and therefore $\sigma_F$ is order-preserving.
\end{proof}

\begin{proposition}\label{prop:height-one-bijection}
  For each pair of objects $[m]$ and $[n]$, the assignments $\sigma\mapsto F_\sigma$ and
  $F\mapsto \sigma_F$ induce mutually inverse correspondences between order-preserving surjections
  $[n]\twoheadrightarrow[m]$ and isomorphism classes of boundary-labelled height-one ordered
  forests $[m]\to F\leftarrow[n]$.
\end{proposition}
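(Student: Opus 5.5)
We check three things: the two assignments are well defined on isomorphism classes, $\sigma_{F_\sigma}=\sigma$, and $F_{\sigma_F}\cong F$ as boundary-labelled forests.

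\emph{Well-definedness.} Construction~\ref{cons:surjection-to-height-one} produces a height-one forest whose maximal elements are labelled by $[m]$ and whose minimal elements are labelled by $[n]$, both in the induced order. Each $T_i$ contains its root $r_i$ together with a nonempty block $B_i$ of leaves. We order $T_i$ with the leaves first and $r_i$ last. With this choice every lower set $L(x)$ is either a singleton or a whole block $T_i$, so it is an interval. Hence $F_\sigma$ is an ordered forest.

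In the other direction, Lemma~\ref{lem:forest-shadow-surjection} shows that $\sigma_F$ is an order-preserving surjection. It is invariant under isomorphism of cospans. Indeed, an isomorphism $\alpha$ commuting with $\iota^\pm$ and $\jmath^\pm$ preserves $\leq$, so $\iota^-(j)\leq_F \iota^+(i)$ holds if and only if $\jmath^-(j)\leq_{F'}\jmath^+(i)$.

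\emph{First composite.} In $F_\sigma$, the leaf labelled $j$ lies below $r_i$ exactly when $j\in B_i=\sigma^{-1}(i)$. Therefore $\sigma_{F_\sigma}(j)=i$ if and only if $\sigma(j)=i$, that is, $\sigma_{F_\sigma}=\sigma$.

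\emph{Second composite.} This is the substantive step. Let $F$ be height-one with labels $\iota^\pm$. Every element of $F$ is either minimal or maximal. An element that is both is isolated, and a tree component consisting of a single point $r$ must be labelled both as $\iota^+(i)$ and as $\iota^-(j)$. We need to handle this possibility. In $F_\sigma$ such a component corresponds to a singleton fibre $B_i$. The two constructions therefore differ as sets: $F_\sigma$ has a separate root and leaf in $T_i$ even when $|B_i|=1$.

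The cleanest way through is to read the boundary-labelled forest in Construction~\ref{cons:surjection-to-height-one} as in Definition~\ref{def:boundary-labelled-forest}. There the labels identify $[m]$ with $\max$ and $[n]$ with $\min$, and a one-element tree is a legitimate height-one tree whose single point is both maximal and minimal. So we interpret $T_i$ for $|B_i|=1$ as having its leaf and root distinct, as written. We expect the same convention on the $F$ side; if singleton components are allowed there, we match them accordingly. We will fix this convention explicitly at the start of the proof.

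Granting the convention, define $\alpha\colon F_{\sigma_F}\to F$ by $r_i\mapsto\iota^+(i)$ and by sending the leaf labelled $j$ to $\iota^-(j)$. This is a bijection, because $F=\min F\sqcup\max F$ and both labelling maps are bijections. By construction $\alpha$ commutes with the labels.

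Next, $\alpha$ preserves $\leq$ in both directions. In a height-one forest the only strict relations are of the form leaf $<$ root. The leaf $j$ lies below $r_i$ in $F_{\sigma_F}$ exactly when $\sigma_F(j)=i$, which by definition means $\iota^-(j)\leq_F\iota^+(i)$.

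Finally, $\alpha$ preserves $\preceq$. By Theorem~\ref{thm:forest-decomposition}, $F$ is the ordinal sum of the trees $L_F(r_i)$, and these occur in the order $i=0,\dots,m$. Inside each tree, the leaves appear in the order of their labels, and the root comes after its leaves, since $x\leq y$ implies $x\preceq y$. This is exactly the total order on $T_0\oplus\cdots\oplus T_m$. So $\alpha$ is an isomorphism in $\Quiv$ compatible with the boundary labels, which gives $F_{\sigma_F}\cong F$.

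The main obstacle is this bookkeeping of the total order and of degenerate one-point trees, rather than any conceptual difficulty.
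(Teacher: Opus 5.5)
Your route is the same as the paper's: you check $\sigma_{F_\sigma}=\sigma$ and $F_{\sigma_F}\cong F$. Your individual verifications are correct and more careful than the paper's two-line argument. These are the interval check for $F_\sigma$, the isomorphism-invariance of $\sigma_F$, and the proof that $\alpha$ preserves $\leq$ and $\preceq$ via Theorem~\ref{thm:forest-decomposition}.

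The one real problem is the degenerate case you flagged, and your hedge there does not work. With the paper's definition (every element is minimal or maximal), a one-point tree is height-one. So for $[0]\to[0]$ there are two boundary-labelled height-one forests: the one-point forest $\{p\}$, which is the identity, and the chain $\{\ell<r\}$. Both have shadow $\mathrm{id}_{[0]}$, and they are not isomorphic because they have different cardinalities. Hence $F\mapsto\sigma_F$ is not injective on the class as literally defined, and no choice of $T_i$ for singleton fibres can rescue the bijection. In particular you cannot ``match'' a one-point component of $F$ with a two-point $T_i$, since $\alpha$ would not even be a bijection. Your step ``$\alpha$ is a bijection because $F=\min F\sqcup\max F$'' silently uses $\min F\cap\max F=\emptyset$, which is exactly the excluded case.

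The fix is to make that a hypothesis rather than a convention to be sorted out later. Restrict to height-one forests with $\min F\cap\max F=\emptyset$, so every component has height exactly one, and take $T_i=\{\ell<r_i\}$ when $|B_i|=1$. Under this restriction your argument goes through verbatim. Alternatively, use one-point $T_i$ for singleton fibres and exclude two-element chain components; then $\alpha$ needs only the obvious modification on one-point components.

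The paper's own proof, ``reconstructing $F_{\sigma_F}$ from these fibres produces the same ordered disjoint union'', tacitly assumes such a restriction and never addresses this case. So your instinct identified a genuine imprecision in the statement; what remains is to turn the hedge into an explicit hypothesis.
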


\begin{proof}
  Let $\sigma\colon [n]\twoheadrightarrow[m]$ be an order-preserving surjection. By construction,
  the forest $F_\sigma$ is the ordered disjoint union of the height-one trees indexed by the fibres
  $B_i=\sigma^{-1}(i)$. Applying Construction~\ref{cons:forest-to-shadow} recovers exactly the
  original fibre decomposition, so $\sigma_{F_\sigma}=\sigma$.

  Conversely, let $[m]\xrightarrow{\ \iota^{+}\ } F \xleftarrow{\ \iota^{-}\ } [n]$ be a
  boundary-labelled height-one ordered forest. The map $\sigma_F$ records, for each minimal element
  of $F$, the unique maximal element lying above it. Reconstructing $F_{\sigma_F}$ from these fibres
  produces the same ordered disjoint union of height-one trees. Hence $F_{\sigma_F}$ is canonically
  isomorphic to $F$ as a boundary-labelled ordered forest.
\end{proof}

\begin{theorem}\label{thm:height-one-deltaepi}
  There is a functor $\pi\colon \OrdFor\longrightarrow \Delta_{\mathrm{epi}}^{op}$ which is the
  identity on objects and sends a morphism represented by a boundary-labelled ordered forest
  $[m]\xrightarrow{\ \iota^{+}\ } F \xleftarrow{\ \iota^{-}\ } [n]$ to the order-preserving
  surjection $\sigma_F\colon [n]\twoheadrightarrow[m]$, regarded as a morphism $[m]\to[n]$ in
  $\Delta_{\mathrm{epi}}^{op}$. This functor is full. Moreover, for each pair of objects $[m]$ and
  $[n]$, it induces on height-one representatives the bijection of
  Proposition~\ref{prop:height-one-bijection}.
\end{theorem}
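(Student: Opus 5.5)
The plan is to verify, in order, that $\pi$ is well defined on morphisms, that it preserves identities and composition, and then that it is full.

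\emph{Well-definedness.} Lemma~\ref{lem:forest-shadow-surjection} already shows that $\sigma_F$ is an order-preserving surjection. It remains to see that $\sigma_F$ depends only on the isomorphism class of the cospan. If $\alpha\colon F\to F'$ is an isomorphism in $\Quiv$ with $\alpha\iota^{\pm}=\jmath^{\pm}$, then $\alpha$ preserves $\leq$ in both directions. Hence $\iota^-(j)\leq_F\iota^+(i)$ iff $\jmath^-(j)\leq_{F'}\jmath^+(i)$, so $\sigma_F=\sigma_{F'}$. For identities, the height-zero forest $[n]$ has each point both minimal and maximal, and therefore $\sigma=\mathrm{id}$.

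\emph{Functoriality.} Given composable $G\colon[\ell]\to[m]$ and $F\colon[m]\to[n]$, I will show $\sigma_{F\circ G}=\sigma_G\circ\sigma_F\colon[n]\to[\ell]$. This is the correct order for a functor to $\Delta_{\mathrm{epi}}^{op}$.

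First work with the raw graft $H=F\cdot G$. By Proposition~\ref{prop:composition-ordered-forest}, $\min H=j_F(\min F)$ and $\max H=j_G(\max G)$. Take a minimal element $a=\iota_F^-(j)$, and set $i=\sigma_F(j)$, so that $a\in F_i$. By clause (iii) of the partial order on $H$, we have $a\leq_H j_G(\iota_G^+(k))$ iff $y_i\leq_G\iota_G^+(k)$, that is, iff $\sigma_G(i)=k$. One edge case is a maximal element of $G$ that is itself minimal, i.e. some $y_i$ is isolated. Then $j_G(y_i)=x_i$, and the comparison happens inside $F_i$; the conclusion is the same.

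Next I must check that reduction does not change the shadow. This is the step I expect to need the most care, since $\mathrm{Red}$ was only described informally. The plan is to show that a single contraction of a unary internal vertex $v$ preserves the relation $\leq$ among the remaining elements. Any chain through $u<v<w$ is replaced by $u<w$, and no boundary element is removed, because $v$ is internal. So the comparabilities between minimal and maximal elements, and the boundary labels, are unchanged. By induction on the number of contractions, $\sigma_{\mathrm{Red}(H)}=\sigma_H$. Hence $\sigma_{F\circ G}=\sigma_G\sigma_F$.

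\emph{Fullness and the final clause.} Given any surjection $\sigma\colon[n]\twoheadrightarrow[m]$, Construction~\ref{cons:surjection-to-height-one} produces $F_\sigma$. It is height-one, so it has no internal vertices and is automatically reduced; thus it is a morphism of $\OrdFor$. By Proposition~\ref{prop:height-one-bijection}, $\sigma_{F_\sigma}=\sigma$, which gives fullness. Restricted to height-one representatives, which are all reduced, $\pi$ is exactly $F\mapsto\sigma_F$. Proposition~\ref{prop:height-one-bijection} therefore gives the claimed bijection.
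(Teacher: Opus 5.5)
Your proposal is correct and follows the same route as the paper: you show $\sigma_F$ is invariant under isomorphism, check identities, prove $\sigma_{F\circ G}=\sigma_G\circ\sigma_F$ by using the identification of $x_i$ with $y_i$ in the graft, and get fullness from $F_\sigma$ together with Proposition~\ref{prop:height-one-bijection}. You are more careful than the paper on three points it passes over silently: that $\mathrm{Red}$ preserves the order among surviving elements and the boundary strata (so $F\cdot G$ and $F\circ G$ have the same shadow), the case of points of $G$ that are both minimal and maximal, and that $F_\sigma$ is reduced and hence genuinely a morphism of $\OrdFor$.
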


\begin{proof}
  If two boundary-labelled ordered forests are isomorphic, then the isomorphism preserves minimal
  elements, maximal elements, the partial order, the total order, and the boundary labellings.
  Therefore the associated surjections coincide. Hence $\sigma_F$ depends only on the isomorphism
  class of the morphism represented by $F$.

  The identity morphism of $[n]$ is represented by
  $[n]\xrightarrow{\ id\ } [n] \xleftarrow{\ id\ } [n]$. Its associated surjection is the identity
  of $[n]$. Thus $\pi$ preserves identities.

  Let $[\ell]\xrightarrow{}G\xleftarrow{}[m]$ and $[m]\xrightarrow{}F\xleftarrow{}[n]$ represent
  composable morphisms. Suppose $\sigma_F(j)=i$ and $\sigma_G(i)=k$. Then
  $\iota_F^-(j)\leq_F \iota_F^+(i)$ and $\iota_G^-(i)\leq_G \iota_G^+(k)$.  In the standard
  representative of the composite $F\circ G$, the elements $\iota_F^+(i)$ and $\iota_G^-(i)$ are
  identified. Hence the image of $\iota_F^-(j)$ in the composite lies below $\iota_G^+(k)$, so
  $\sigma_{F\circ G}(j)=k$. Therefore
  \[
    \sigma_{F\circ G}=\sigma_G\circ\sigma_F.
  \]
  Since composition in $\Delta_{\mathrm{epi}}^{op}$ is opposite to composition in
  $\Delta_{\mathrm{epi}}$, this is exactly the statement that
  \[
    \pi(F\circ G)=\pi(F)\circ\pi(G).
  \]
  Hence $\pi$ is a functor.

  To prove fullness, let $[m]\to[n]$ be any morphism in $\Delta_{\mathrm{epi}}^{op}$. Equivalently,
  let $\sigma\colon [n]\twoheadrightarrow[m]$ be the corresponding morphism in
  $\Delta_{\mathrm{epi}}$. Construction~\ref{cons:surjection-to-height-one} produces a height-one
  ordered forest $F_\sigma$ with $\pi(F_\sigma)=\sigma$, viewed in the opposite category. Hence
  every morphism of $\Delta_{\mathrm{epi}}^{op}$ lies in the image of $\pi$.  The final assertion
  is exactly Proposition~\ref{prop:height-one-bijection}.
\end{proof}

\begin{remark}\label{rem:simplicial-shadow}
  The functor $\pi$ is the linear shadow of the ordered-forest calculus. It forgets all internal
  branching data and remembers only, for each minimal boundary element, which maximal boundary
  element lies above it. For each morphism in $\Delta_{\mathrm{epi}}^{op}$, height-one forests
  provide canonical representatives in $\OrdFor$, but these representatives are not closed under
  composition. Thus $\Delta_{\mathrm{epi}}^{op}$ is recovered only as a quotient shadow of
  $\OrdFor$, not as a subcategory of it.
\end{remark}

\section{Homotopy Theory of Arboreal Objects}\label{sec:arboreal-homotopy}

Let $\pi\colon \OrdFor \longrightarrow \Delta_{\mathrm{epi}}^{op}$ be the full functor of
Theorem~\ref{thm:height-one-deltaepi}. For any category $\mathcal C$, write
\[
  \sC:=\mathcal C^{\Delta_{\mathrm{epi}}} \quad\text{ and }\quad \arb{\mathcal C}:=\mathcal
  C^{\OrdFor^{op}}
\]
for the categories of semisimplicial objects and arboreal objects in $\mathcal C$. We first compare
the two homotopy theories at the level of localization, and then pass to model structures.

\subsection{Equivalence of homotopy localizations}\label{subsec:localization-equivalence}

Assume first that $\mathcal C$ is complete. Precomposition with
$\pi^{op}\colon \OrdFor^{op}\longrightarrow \Delta_{\mathrm{epi}}$ defines a functor
\[
  \pi^\ast\colon \sC\longrightarrow \arb{\mathcal C},
  \qquad
  \pi^\ast(A)=A\circ \pi^{op},
\]
and since $\mathcal C$ is complete, $\pi^\ast$ admits a right adjoint
$\pi_\ast\colon \arb{\mathcal C}\longrightarrow \sC$ given by right Kan extension along $\pi^{op}$;
see~\cite[Chapter~X, \S 3]{MacLane1998}.

\begin{lemma}\label{lem:pi-star-fully-faithful}
  The functor $\pi^\ast\colon \sC\longrightarrow \arb{\mathcal C}$ is fully faithful.
\end{lemma}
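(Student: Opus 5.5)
The plan is to use the standard criterion: precomposition along a functor that is bijective on objects and full is fully faithful. The functor $\pi$ of Theorem~\ref{thm:height-one-deltaepi} is the identity on objects and is full, so $\pi^{op}\colon\OrdFor^{op}\to\Delta_{\mathrm{epi}}$ has the same two properties. No completeness of $\mathcal C$ is needed for this part; completeness only enters through the existence of $\pi_\ast$.

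\emph{Faithfulness.} Let $\alpha,\beta\colon A\Rightarrow B$ be natural transformations in $\sC$ with $\pi^\ast\alpha=\pi^\ast\beta$. The component of $\pi^\ast\alpha$ at $[n]$ is $\alpha_{\pi[n]}=\alpha_{[n]}$, because $\pi$ is the identity on objects. Hence $\alpha_{[n]}=\beta_{[n]}$ for every $n$, so $\alpha=\beta$.

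\emph{Fullness.} Let $\gamma\colon\pi^\ast A\Rightarrow\pi^\ast B$ be natural in $\arb{\mathcal C}$, and define $\alpha_{[n]}:=\gamma_{[n]}$.
\begin{enumerate}[(1)]
\item To check naturality of $\alpha$, take a surjection $\sigma\colon[n]\twoheadrightarrow[m]$, which we view as a morphism of $\Delta_{\mathrm{epi}}$ between the corresponding objects.
\item By fullness of $\pi$, we can choose a morphism $\phi$ of $\OrdFor$ with $\pi^{op}(\phi)=\sigma$. Concretely, Construction~\ref{cons:surjection-to-height-one} supplies the height-one forest $F_\sigma$.
\item Naturality of $\gamma$ at $\phi$ reads $\gamma\circ A(\pi^{op}\phi)=B(\pi^{op}\phi)\circ\gamma$ on the appropriate components, which is exactly $\alpha\circ A(\sigma)=B(\sigma)\circ\alpha$.
\end{enumerate}
Therefore $\alpha$ is a morphism $A\to B$ in $\sC$, and $\pi^\ast\alpha=\gamma$ by construction.

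\emph{Main obstacle.} The only point needing care is bookkeeping with variances. An arboreal object is contravariant on $\OrdFor$, a semisimplicial object here is covariant on $\Delta_{\mathrm{epi}}$, and $\pi$ lands in $\Delta_{\mathrm{epi}}^{op}$. We must confirm that the naturality square for $\gamma$ at $\phi$ is literally the square for $\alpha$ at $\sigma$, with sources and targets in the right places. Once the conventions of Remark~\ref{rem:direction} are fixed, this is a direct translation. We never need $\pi$ to be faithful: the height-one lift need not be unique, but any lift gives the same square. Finally, as an optional cross-check, full faithfulness of $\pi^\ast$ is equivalent to the unit $A\to\pi_\ast\pi^\ast A$ being an isomorphism, but the direct argument above already suffices.
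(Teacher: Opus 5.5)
Your proposal is correct and takes the same approach as the paper, whose proof is the one-line observation that precomposition along the full, identity-on-objects functor $\pi^{op}$ is fully faithful; your faithfulness and fullness checks, using any lift of $\sigma$ such as $F_\sigma$, just spell out that standard fact. Your side remarks are also accurate: faithfulness of $\pi$ is never needed, and completeness of $\mathcal C$ plays no role in this lemma.
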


\begin{proof}
  Since $\pi^{op}\colon \OrdFor^{op}\to \Delta_{\mathrm{epi}}$ is full and the identity on objects,
  precomposition with $\pi^{op}$ is fully faithful. Hence
  $\pi^\ast\colon \sC\longrightarrow \arb{\mathcal C}$ is fully faithful.
\end{proof}

Let $W_{\sC}$ be any class of morphisms in $\sC$. Define the corresponding class of morphisms in
$\arb{\mathcal C}$ by detection through the right Kan extension:
\[
  W_{\arb{\mathcal C}}
  :=
  \{\,f\in \Mor(\arb{\mathcal C})\mid \pi_\ast f\in W_{\sC}\,\}.
\]
Write
\[
  \Ho(\sC):=\sC[W_{\sC}^{-1}]
  \qquad\text{and}\qquad
  \Ho(\arb{\mathcal C})
  :=
  \arb{\mathcal C}[W_{\arb{\mathcal C}}^{-1}]
\]
for the corresponding Gabriel--Zisman localizations \cite[I.1]{GabrielZisman1967}.

\begin{theorem}\label{thm:localization-equivalence}
  Assume that $\mathcal C$ is complete. Then the adjunction
  \begin{equation}\label{eq:adjoint-equivalence}
    \pi^\ast\colon \sC \rightleftarrows \arb{\mathcal C}:\pi_\ast
  \end{equation}
  descends to an adjoint equivalence $\Ho(\sC)\simeq \Ho(\arb{\mathcal C})$.
\end{theorem}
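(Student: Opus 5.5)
Since $\pi^\ast$ is fully faithful by Lemma~\ref{lem:pi-star-fully-faithful}, the unit $\eta\colon \mathrm{id}_{\sC}\Rightarrow \pi_\ast\pi^\ast$ is a natural isomorphism. This is the standard fact that a right adjoint has invertible unit exactly when the left adjoint is fully faithful. It can also be seen directly from the pointwise formula for the right Kan extension: since $\pi^{op}$ is full and the identity on objects, the comma category computing $(\pi_\ast\pi^\ast A)_n$ has an initial object $\mathrm{id}_{[n]}$, so the limit collapses to $A_n$. The plan is to combine this with the definition $W_{\arb{\mathcal C}}=\pi_\ast^{-1}(W_{\sC})$, checking the formal conditions under which an adjunction descends to localizations.

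\textbf{Step 1: both functors preserve weak equivalences.} For $\pi_\ast$ this holds by definition. For $\pi^\ast$, take $f\in W_{\sC}$. Naturality of $\eta$ gives $\pi_\ast\pi^\ast f=\eta\, f\,\eta^{-1}$. I will assume the mild hypothesis that $W_{\sC}$ contains the isomorphisms and is closed under composition with them. This holds for any class of weak equivalences, and if necessary I would state it explicitly. Under it, $\pi_\ast\pi^\ast f\in W_{\sC}$, so $\pi^\ast f\in W_{\arb{\mathcal C}}$. By the universal property of Gabriel--Zisman localization, $\pi^\ast$ and $\pi_\ast$ therefore induce functors $\overline{\pi^\ast}\colon\Ho(\sC)\to\Ho(\arb{\mathcal C})$ and $\overline{\pi_\ast}$ in the other direction.

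\textbf{Step 2: the unit and counit descend and become invertible.} Localization is $2$-functorial on natural transformations between functors preserving weak equivalences, so $\eta$ and the counit $\varepsilon\colon\pi^\ast\pi_\ast\Rightarrow\mathrm{id}$ descend to natural transformations $\bar\eta,\bar\varepsilon$. The triangle identities descend as well, so $\overline{\pi^\ast}\dashv\overline{\pi_\ast}$ holds on the homotopy categories. Since $\eta$ is already an isomorphism, $\bar\eta$ is one too.

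For $\bar\varepsilon$, the key point is that each component $\varepsilon_X$ lies in $W_{\arb{\mathcal C}}$. By the triangle identity, $\pi_\ast(\varepsilon_X)\circ\eta_{\pi_\ast X}=\mathrm{id}_{\pi_\ast X}$. Because $\eta_{\pi_\ast X}$ is an isomorphism, $\pi_\ast(\varepsilon_X)=\eta_{\pi_\ast X}^{-1}$ is an isomorphism, and hence lies in $W_{\sC}$. Thus $\varepsilon_X\in W_{\arb{\mathcal C}}$, and its image in $\Ho(\arb{\mathcal C})$ is invertible, so $\bar\varepsilon$ is a natural isomorphism. An adjunction whose unit and counit are both isomorphisms is an adjoint equivalence, which gives $\Ho(\sC)\simeq\Ho(\arb{\mathcal C})$.

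\textbf{Main obstacle.} The only delicate point is the hypothesis on $W_{\sC}$. The statement says "any class", so one must have isomorphisms in $W_{\sC}$, closed under composition with isomorphisms, for the argument to go through. I will make this assumption explicit, noting that it is harmless since Gabriel--Zisman localization inverts isomorphisms anyway. Alternatively, one can replace $W_{\sC}$ by its closure under composition with isomorphisms without changing $\Ho(\sC)$. With that settled, everything else is formal, and the genuinely combinatorial input is only the fullness of $\pi$ from Theorem~\ref{thm:height-one-deltaepi}, entering through Lemma~\ref{lem:pi-star-fully-faithful}.
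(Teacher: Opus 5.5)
Your proof is correct and follows the paper's argument step for step: the unit is invertible by full faithfulness of $\pi^\ast$, $\pi_\ast\pi^\ast u$ is conjugate to $u$ via $\eta$, and the triangle identity forces $\pi_\ast(\varepsilon_X)$ to be invertible. The hypothesis you isolate, that $W_{\sC}$ contains the isomorphisms and is closed under composition with them, is exactly what the paper uses tacitly, and it is genuinely needed: for $W_{\sC}=\emptyset$ the conclusion would make $\pi^\ast$ an equivalence, which fails in general because $\pi$ is not faithful (so replacing $W_{\sC}$ by its isomorphism-closure, while harmless for $\Ho(\sC)$, enlarges $W_{\arb{\mathcal C}}=\pi_\ast^{-1}(W_{\sC})$ and thus amends the statement rather than proving it as written).

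The same non-faithfulness makes your side remark inaccurate. $([n],\mathrm{id})$ is not initial in the comma category: the three-leaf corolla and the tree with one binary internal vertex are distinct reduced morphisms $[0]\to[2]$ with the same shadow. The limit still collapses to $A_n$, since $([n],\mathrm{id})$ maps to every object and all such maps have the same image under $\pi^{op}$, and in any case your main justification via full faithfulness suffices.
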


\begin{proof}
  By Lemma~\ref{lem:pi-star-fully-faithful}, the unit $\eta_A\colon A\to \pi_\ast\pi^\ast A$ is an
  isomorphism for every $A\in \sC$. Hence if $u\in W_{\sC}$, then $\pi_\ast(\pi^\ast u)\cong u$, so
  $\pi^\ast u\in W_{\arb{\mathcal C}}$. By definition, $\pi_\ast$ sends $W_{\arb{\mathcal C}}$ into
  $W_{\sC}$. Therefore the adjunction $\pi^\ast\dashv \pi_\ast$ induces an adjunction on
  localizations
  \begin{equation}\label{eq:ho-equivalence}
    L(\pi^\ast)\colon \Ho(\sC)\rightleftarrows \Ho(\arb{\mathcal C}):L(\pi_\ast).
  \end{equation}
  The localized unit is the image of $\eta$, hence is an isomorphism. For $X\in \arb{\mathcal C}$,
  let $\varepsilon_X\colon \pi^\ast\pi_\ast X\to X$ be the counit. Applying $\pi_\ast$ and using
  the triangle identity together with the fact that $\eta_{\pi_\ast X}$ is an isomorphism, we
  obtain that $\pi_\ast(\varepsilon_X)$ is an isomorphism. Thus
  $\varepsilon_X\in W_{\arb{\mathcal C}}$, so the localized counit is also an isomorphism.
  Therefore~\eqref{eq:ho-equivalence} is an adjoint equivalence.
\end{proof}

\begin{example}\label{ex:R-linear-localization}
  Let $R$ be a ring, and set
  \[
    \sMod{R}:=(\Mod{R})^{\Delta_{\mathrm{epi}}}
    \qquad\text{and}\qquad
    \ArbMod{R}:=(\Mod{R})^{\OrdFor^{op}}.
  \]
  Suppose $W_{\sMod{R}}$ is a class of morphisms in $\sMod{R}$ defining a chosen homotopy theory of
  semisimplicial $R$-modules. Define a class of morphisms in $\ArbMod{R}$ by
  \[
    W_{\ArbMod{R}}
    :=
    \{\,f\in\Mor(\ArbMod{R})\mid \pi_\ast(f)\in W_{\sMod{R}}\,\}.
  \]
  Since $\Mod{R}$ is complete, Theorem~\ref{thm:localization-equivalence} applies and yields an
  adjoint equivalence
  \[
    \sMod{R}[W_{\sMod{R}}^{-1}]
    \simeq
    \ArbMod{R}[W_{\ArbMod{R}}^{-1}].
  \]
  Thus, once weak equivalences of arboreal $R$-modules are defined by detection through $\pi_\ast$,
  the homotopy category of arboreal $R$-modules is equivalent to that of semisimplicial $R$-modules.
\end{example}

\begin{remark}\label{rem:localization-consequence}
  Theorem~\ref{thm:localization-equivalence} applies whenever weak equivalences of arboreal objects
  are defined to be those morphisms detected by the right Kan extension $\pi_\ast$. In that sense,
  the homotopy theory of arboreal objects is determined by the semisimplicial shadow produced by
  $\pi_\ast$.
\end{remark}

\subsection{Automatic right-induced model structures}\label{sec:model-structure}

Assume now that $\mathcal C$ is bicomplete and that $\sC=\mathcal C^{\Delta_{\mathrm{epi}}}$ is
equipped with a cofibrantly generated model structure. Write
\[
  W_{\sC},\qquad \Fib_{\sC},\qquad \Cof_{\sC}
\]
for its weak equivalences, fibrations, and cofibrations. Since $\mathcal C$ is bicomplete,
precomposition along $\pi^{op}$ has both Kan extension adjoints, so that
$\pi_!\dashv \pi^\ast\dashv \pi_\ast$.

\begin{definition}\label{def:ordfor-weak-fib}
  A morphism $f\colon X\to Y$ in $\arb{\mathcal C}$ is called
  \begin{enumerate}[(i)]
  \item a \emph{weak equivalence} if $\pi_\ast f\in W_{\sC}$;
  \item a \emph{fibration} if $\pi_\ast f\in \Fib_{\sC}$.
  \end{enumerate}
  A \emph{cofibration} is a morphism having the left lifting property with respect to the acyclic
  fibrations.  Thus both weak equivalences and fibrations in $\arb{\mathcal C}$ are created by the
  right Kan extension $\pi_\ast$.
\end{definition}

\begin{remark}\label{rem:acyclicity-right-induced}
  The standard acyclicity condition for right-induced transfer along~\eqref{eq:adjoint-equivalence}
  is the inclusion
  \[
    {}^{\boxslash}\pi_\ast^{-1}(\Fib_{\sC})
    \subseteq
    \pi_\ast^{-1}(W_{\sC}).
  \]
  In our situation this condition is automatic, because the unit of $\pi^\ast\dashv \pi_\ast$ is an
  isomorphism by Lemma~\ref{lem:pi-star-fully-faithful}; see
  Proposition~\ref{prop:acyclicity-automatic}.
\end{remark}




\begin{proposition}\label{prop:acyclicity-automatic}
  Let $\mathcal M$ be a cofibrantly generated model category with generating acyclic cofibrations
  $J_{\mathcal M}$. Let $L\colon \mathcal M \rightleftarrows \mathcal N\colon U$ be an adjunction whose
  unit $\eta\colon \mathrm{Id}_{\mathcal M}\longrightarrow UL$ is an isomorphism. Then the standard
  acyclicity condition for right-induced transfer along $U$ is satisfied.
\end{proposition}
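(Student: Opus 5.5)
The plan is to verify the acyclicity inclusion ${}^{\boxslash}U^{-1}(\Fib_{\mathcal M})\subseteq U^{-1}(W_{\mathcal M})$ by the usual small-object route, and to use the unit isomorphism $\eta\colon \mathrm{Id}\to UL$ wherever the classical argument needs a fibrant replacement or path object. Three facts follow formally from the hypothesis on $\eta$ and will be recorded first. The functor $L$ is fully faithful. By the triangle identity, $U(\varepsilon_X)$ is an isomorphism for every $X\in\mathcal N$, exactly as in the proof of Theorem~\ref{thm:localization-equivalence}. Finally, for every $j\in J_{\mathcal M}$ we have $U(Lj)\cong j$ up to the isomorphisms $\eta$, so $U(Lj)$ is an acyclic cofibration.

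Next I would identify the left class. By adjunction, a morphism $g$ of $\mathcal N$ lies in $U^{-1}(\Fib_{\mathcal M})$ exactly when it has the right lifting property against $LJ_{\mathcal M}$. Hence ${}^{\boxslash}U^{-1}(\Fib_{\mathcal M})={}^{\boxslash}(LJ_{\mathcal M})^{\boxslash}$. Granting the smallness of the domains of $LJ_{\mathcal M}$ (which I would assume as part of the standing transfer hypotheses), the small object argument together with the retract argument shows that this class consists of retracts of relative $LJ_{\mathcal M}$-cell complexes. Weak equivalences of $\mathcal M$ are closed under retracts, and $U$ preserves retracts. It therefore suffices to show that $U$ sends every pushout of a map $Lj$ with $j\in J_{\mathcal M}$, and every transfinite composite of such pushouts, to a weak equivalence.

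The main obstacle is this last step, because $U$ is only a right adjoint and need not preserve pushouts or transfinite compositions. My first attempt would work with the pushout $X\to X\sqcup_{LA}LB$ of $Lj\colon LA\to LB$ along a map $LA\to X$. I would compare it with the pushout $LUX\to LUX\sqcup_{LA}LB$ of $Lj$ along the adjunct map $LA\to LUX$, and use the counit $\varepsilon_X$, whose image under $U$ is invertible, to relate the two. On the $\mathcal M$ side, $L$ preserves pushouts, so the second pushout equals $L(UX\to UX\sqcup_A B)$. Applying $U$ and using $UL\cong\mathrm{Id}$ then yields the pushout of $j$ in $\mathcal M$, which is an acyclic cofibration. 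The remaining point is to show that the comparison map between the two pushouts becomes an isomorphism, or at least a weak equivalence, after applying $U$. Transfinite composites would be handled in the same way, reducing to the closure of acyclic cofibrations under transfinite composition in $\mathcal M$.

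If that comparison fails in general, I would fall back on the concrete case $U=\pi_\ast$. There $\pi^{op}$ is full and the identity on objects, so $\pi_\ast X([n])$ is the end $\int_{[k]}X([k])^{\Hom_{\Delta_{\mathrm{epi}}}([k],[n])}$ taken over the fibres of $\pi$. I would then check directly that this limit commutes with the relevant pushouts along maps of the form $\pi^\ast j$.
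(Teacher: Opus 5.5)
\textbf{There is a genuine gap, and it cannot be closed.} The step you leave open is showing that $U$ sends pushouts of maps $Lj$, and transfinite composites of such pushouts, to weak equivalences, i.e.\ that the comparison map between your two pushouts becomes a weak equivalence after applying $U$. This is not a technicality: it does not follow from $\eta$ being invertible, and the proposition is false at this generality.

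Here is a counterexample. Let $\mathcal M$ be the poset $0<1<2<3$, viewed as a category, with the following structure:
\begin{itemize}
\item the weak equivalences are the identities together with $1\le 2$;
\item every map is a cofibration, and $J_{\mathcal M}=\{1\le 2\}$;
\item the fibrations are all maps except $1\le 2$ and $1\le 3$.
\end{itemize}
The model axioms and cofibrant generation can be checked by hand. Let $\mathcal N$ be the lattice $\bot<a<b<t$, $a<y<t$, with $b$ and $y$ incomparable. Put $L(0,1,2,3)=(\bot,a,b,t)$ and $U(\bot,a,b,y,t)=(0,1,2,1,3)$. Then $L\dashv U$ and $UL=\mathrm{Id}$.

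The pushout of $L(1\le 2)=(a\le b)$ along $a\le y$ is $y\le t$. Being a pushout of some $Lj$, it has the left lifting property against every map whose image under $U$ is a fibration. Yet $U(y\le t)=(1\le 3)$ is not a weak equivalence. In your notation, with $X=y$, your two pushouts are $LUX\sqcup_{LA}LB=b$ and $X\sqcup_{LA}LB=t$. After applying $U$, the comparison map becomes $2\le 3$, which is not a weak equivalence.

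\textbf{Comparison with the paper.} The paper's proof consists of your third preliminary observation, $U(Lj)\cong j$, combined with the assertion that the acyclicity condition holds if and only if $L(J_{\mathcal M})$ consists of weak equivalences in $\mathcal N$. The example refutes that reduction: $U(a\le b)=(1\le 2)$ is a weak equivalence, yet acyclicity fails. The transfer theorem concerns relative $L(J_{\mathcal M})$-cell complexes, which are exactly the pushouts and transfinite composites you flagged. So your diagnosis of where the difficulty lies is correct, and sharper than the paper's. But your proposal does not prove the statement, and no proof can without an additional hypothesis.

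\textbf{Hypotheses that would make it true.} Any of the following would suffice:
\begin{itemize}
\item $U$ carries pushouts of maps in $L(J_{\mathcal M})$, and their transfinite composites, to weak equivalences.
\item $U$ has a further right adjoint $R$. Then $UR\cong\mathrm{Id}$. Given a factorization $Uf=p\circ i$ into an acyclic cofibration $i$ followed by a fibration $p$, set $P=Y\times_{RUY}RV$. Then $U(P\to Y)\cong p$, and lifting in $\mathcal N$ and applying $U$ exhibits $Uf$ as a retract of $i$.
\end{itemize}
Your fallback for $U=\pi_\ast$ is where a correct special case lives. $(\pi_\ast X)[n]$ is the joint equalizer in $X[n]$ of all pairs $X(u),X(u')$ with $\pi(u)=\pi(u')$. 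In set- or module-valued situations where $J$ consists of objectwise monomorphisms, this commutes with the relevant pushouts and transfinite composites. That argument would, however, prove a special case, not the proposition as stated.
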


\begin{proof}
  By the standard transfer theorem for right-induced model structures, the acyclicity condition
  holds if and only if the left-transferred generating acyclic cofibrations $L(J_{\mathcal M})$ are
  weak equivalences in $\mathcal N$. In a right-induced model structure, a morphism $f$ in
  $\mathcal N$ is a weak equivalence if and only if $U(f)$ is a weak equivalence in $\mathcal
  M$. Therefore, we must verify that $U(L(j))$ is a weak equivalence in $\mathcal M$ for every
  $j \in J_{\mathcal M}$. For any such $j\colon A \to B$, the naturality of the unit $\eta$ yields
  a commutative square in $\mathcal M$ where the horizontal maps are $\eta_A$ and $\eta_B$, and the
  vertical maps are $j$ and $U(L(j))$. Because the unit is a natural isomorphism, the horizontal
  maps are isomorphisms. Since $j$ is a weak equivalence, the two-out-of-three property ensures
  that $U(L(j))$ is also a weak equivalence. Thus, $L(j)$ is a weak equivalence in $\mathcal N$,
  and the acyclicity condition holds automatically.
\end{proof}

\begin{theorem}\label{thm:right-induced-OrdFor}
  Let $\mathcal C$ be bicomplete, and assume that $\sC=\mathcal C^{\Delta_{\mathrm{epi}}}$ carries
  a cofibrantly generated model structure. Then $\arb{\mathcal C}$ admits a cofibrantly generated
  model structure right-induced from that on $\sC$ along $\pi_\ast$. Equivalently, the weak
  equivalences and fibrations in $\arb{\mathcal C}$ are exactly those of
  Definition~\ref{def:ordfor-weak-fib}.
\end{theorem}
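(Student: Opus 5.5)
The plan is to apply the standard transfer theorem for right-induced model structures (Kan's transfer theorem, in the form of Hirschhorn, Theorem 11.3.2, or Crans) to the adjunction $\pi^\ast\colon \sC\rightleftarrows \arb{\mathcal C}:\pi_\ast$. Here $\pi_\ast$ plays the role of the right adjoint $U$, and $\pi^\ast$ plays the role of the left adjoint $L$. Let $I$ and $J$ be generating cofibrations and generating acyclic cofibrations of $\sC$. The candidate generating sets on $\arb{\mathcal C}$ are $\pi^\ast I$ and $\pi^\ast J$. The theorem then yields a cofibrantly generated model structure once three things hold: (a) $\arb{\mathcal C}$ is bicomplete; (b) $\pi^\ast I$ and $\pi^\ast J$ permit the small object argument; (c) the acyclicity condition holds, namely that relative $\pi^\ast J$-cell complexes are sent by $\pi_\ast$ into $W_{\sC}$. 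Its conclusion is exactly that weak equivalences and fibrations are created by $\pi_\ast$, i.e.\ coincide with those of Definition~\ref{def:ordfor-weak-fib}. Cofibrations are then the maps with the left lifting property against these acyclic fibrations, which matches the definition. So the equivalence stated in the theorem is automatic.

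For (a), $\arb{\mathcal C}=\mathcal C^{\OrdFor^{op}}$ is a presheaf category valued in a bicomplete $\mathcal C$, and limits and colimits are computed objectwise.

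For (b), I would use adjunction. Suppose the domains of $I$ and $J$ are small relative to $I$-cell and $J$-cell complexes in $\sC$. A map $\pi^\ast A\to \operatorname{colim}_\beta X_\beta$ corresponds to a map $A\to \pi_\ast\operatorname{colim}_\beta X_\beta$. Smallness of $\pi^\ast A$ relative to $\pi^\ast I$-cell complexes therefore reduces to $\pi_\ast$ commuting with the relevant transfinite compositions, together with $\pi_\ast$ sending $\pi^\ast I$-cells into a class against which $A$ is small.

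This is the step I expect to be the main obstacle, because $\pi_\ast$ is a right Kan extension and need not preserve filtered colimits in general. I would first try to exploit the structure of $\pi$: it is identity on objects and full, with finite hom-sets by Theorem~\ref{thm:OrdFor-category}. The pointwise formula expresses $(\pi_\ast X)_n$ as an end over finitely many data, i.e.\ a finite limit in $\mathcal C$ indexed by the comma category $[n]\downarrow\pi^{op}$. These comma categories have finitely many objects and morphisms. Finite limits commute with filtered colimits when $\mathcal C$ is, say, locally presentable. If that hypothesis is unavailable, I would simply include smallness in the tacit cofibrant-generation hypothesis, as is standard in transfer statements.

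For (c), I would invoke Proposition~\ref{prop:acyclicity-automatic} with $\mathcal M=\sC$, $\mathcal N=\arb{\mathcal C}$, $L=\pi^\ast$ and $U=\pi_\ast$. The unit $\eta$ is an isomorphism by Lemma~\ref{lem:pi-star-fully-faithful}, as noted in Remark~\ref{rem:acyclicity-right-induced}. I would double-check that this covers all relative $\pi^\ast J$-cell complexes and not merely the maps $\pi^\ast j$. Pushouts and transfinite compositions in $\arb{\mathcal C}$ are computed objectwise, and $\pi^\ast$ preserves all colimits. Hence a relative $\pi^\ast J$-cell complex whose source lies in the image of $\pi^\ast$ is $\pi^\ast$ of a relative $J$-cell complex, and $\pi_\ast$ maps it, up to the unit isomorphism, to a $J$-cell complex, which lies in $W_{\sC}$. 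For general sources I would fall back on the formulation of the acyclicity condition in Remark~\ref{rem:acyclicity-right-induced} and on Proposition~\ref{prop:acyclicity-automatic} as stated, which the paper takes as given. With (a)–(c) in hand, the transfer theorem gives the model structure.
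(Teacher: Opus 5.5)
\textbf{Same route as the paper, and the gap you flagged is real.} Your proposal follows the paper: Kan transfer along $\pi^\ast\dashv\pi_\ast$, with acyclicity deduced from the unit being an isomorphism via Proposition~\ref{prop:acyclicity-automatic}. You located the weak point correctly, but your fallback for general sources does not close it, and the paper's proof has the same hole.

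Proposition~\ref{prop:acyclicity-automatic} only shows that the maps $\pi^\ast j$ themselves lie in $\pi_\ast^{-1}(W_{\sC})$. That class is not closed under cobase change, because $\pi_\ast$ does not preserve pushouts. Since $\pi$ is full and bijective on objects, $(\pi_\ast X)([n])$ is the joint equalizer of the pairs $X(\alpha),X(\alpha')\colon X([n])\to X([k])$, over all $\alpha,\alpha'\colon[k]\to[n]$ in $\OrdFor$ with $\pi\alpha=\pi\alpha'$. Every attaching map $\pi^\ast A\to X$ factors through the counit $\pi^\ast\pi_\ast X\to X$. So a $\pi^\ast J$-cell is $X\to X\sqcup_{\pi^\ast Y}\pi^\ast Y'$, with $Y=\pi_\ast X$ and $Y\to Y'$ a pushout of some $j$. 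What you need is $\pi_\ast$ of this pushout to be $Y'$. But the pushout can identify $X(\alpha)x$ with $X(\alpha')x$ for some $x$ outside the equalizer, and then $x$ enters it. Your treatment of sources in the image of $\pi^\ast$ is correct, but it avoids exactly this phenomenon.

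\textbf{A counterexample.} This does happen, so the statement fails in the stated generality.
\begin{itemize}
\item \emph{Model structure.} Take $\mathcal C=\mathbf{Set}$ with the cofibrantly generated model structure in which every map is a cofibration and $S\to T$ is a weak equivalence iff $S=\emptyset\Leftrightarrow T=\emptyset$. Here $J=\{\{a,b\}\to\{*\},\ \{*\}\to\{*,*\}\}$. Give $\sC$ the projective structure.
\item \emph{The presheaf $X$.} Set $X([0])=\{u,v\}$, $X([1])=\{y_0,y_1\}$, $X([2])=\{x\}$, and $X([k])=\emptyset$ for $k\geq 3$. Endomorphisms act trivially. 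A forest $[1]\to[2]$ whose shadow $\sigma^i$ collapses $i,i+1$ sends $x\mapsto y_i$. Both forests $[0]\to[1]$ send $y_0\mapsto u$ and $y_1\mapsto v$. A tree $[0]\to[2]$ sends $x\mapsto v$ if it groups the leaves as $(0(12))$, and $x\mapsto u$ otherwise.
\item \emph{Functoriality.} A composite $[0]\to[1]\to[2]$ groups its leaves according to the shadow of the second factor, since reduction never removes a binary vertex.
\item \emph{The failure.} $(\pi_\ast X)([2])=\emptyset$, because $((01)2)$ and $(0(12))$ have the same shadow but send $x$ to $u\neq v$. Push out along $\pi^\ast\bigl(\Delta_{\mathrm{epi}}([0],-)\times(\{a,b\}\to\{*\})\bigr)$ attached at $(u,v)$. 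This identifies $u$ with $v$, and the result $Z$ has $(\pi_\ast Z)([2])=\{x\}$.
\end{itemize}
So $X\to Z$ has the left lifting property against every $\pi_\ast$-fibration, yet $\pi_\ast(X\to Z)$ is $\emptyset\to\{x\}$ at $[2]$. Hence no right-induced model structure exists here.

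\textbf{What would repair it.} A correct statement needs a hypothesis preventing cobase change along $\pi^\ast J$ from enlarging these equalizers. One sufficient option: take $\mathcal C$ a Grothendieck topos and the cofibrations of $\sC$ monic. Then $\pi_\ast(X\sqcup_{\pi^\ast Y}\pi^\ast Y')=Y'$ by distributivity of subobject lattices, and the finite limit $\pi_\ast$ commutes with transfinite composites. Your step (b) likewise adds a hypothesis rather than proving one, but that is secondary.
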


\begin{proof}
  By Lemma~\ref{lem:pi-star-fully-faithful}, the unit of
  $\pi^\ast\colon \sC \rightleftarrows \arb{\mathcal C}:\pi_\ast$ is an isomorphism. Hence the
  acyclicity condition of Remark~\ref{rem:acyclicity-right-induced} holds by
  Proposition~\ref{prop:acyclicity-automatic}. The existence of the right-induced cofibrantly
  generated model structure therefore follows from the standard transfer theorem; see
  \cite[Theorem~2.2.1]{HKRS2017}.
\end{proof}

\begin{remark}\label{rem:generating-sets}
  If $I_{\sC}$ and $J_{\sC}$ are sets of generating cofibrations and generating acyclic
  cofibrations for the chosen model structure on $\sC$, then in the right-induced model structure
  on $\arb{\mathcal C}$ one may take $\pi^\ast(I_{\sC})$ and $\pi^\ast(J_{\sC})$ as sets of
  generating cofibrations and generating acyclic cofibrations.
\end{remark}

\begin{corollary}\label{cor:Reedy-specialization}
  Let $\mathcal C$ be a bicomplete cofibrantly generated model category. Since
  $\Delta_{\mathrm{inj}}$ is a direct Reedy category, the category
  $\mathcal C^{\Delta_{\mathrm{inj}}^{op}}$ of semisimplicial objects carries the Reedy model
  structure; see \cite[Chapter~15]{Hirschhorn2003} and \cite[Chapter~5]{Hovey1999}. Via the
  canonical isomorphism $\Delta_{\mathrm{inj}}^{op}\cong \Delta_{\mathrm{epi}}$, this yields a
  cofibrantly generated model structure on $\sC=\mathcal C^{\Delta_{\mathrm{epi}}}$. Hence
  $\arb{\mathcal C}$ carries the corresponding right-induced cofibrantly generated model structure
  along $\pi_\ast$.
\end{corollary}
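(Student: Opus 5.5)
The statement to prove is Corollary~\ref{cor:Reedy-specialization}. The plan is to assemble it from three ingredients: the Reedy model structure on semisimplicial objects, the transport of that structure along an isomorphism of indexing categories, and Theorem~\ref{thm:right-induced-OrdFor}.

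\textbf{Step 1: the semisimplicial side.} I would first record that $\Delta_{\mathrm{inj}}$ is a direct Reedy category. Use the degree function $[n]\mapsto n$: every non-identity morphism is an injection $[k]\to[n]$ with $k<n$, so it strictly raises degree. All morphisms are therefore degree-raising, and the Reedy factorization axiom is trivial.

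For a bicomplete model category $\mathcal C$, the Reedy model structure on $\mathcal C^{\Delta_{\mathrm{inj}}^{op}}$ has the following description, by \cite[Chapter~15]{Hirschhorn2003} and \cite[Chapter~5]{Hovey1999}:
\begin{enumerate}[(i)]
\item weak equivalences are levelwise;
\item fibrations are the maps whose relative matching maps $X_n\to Y_n\times_{M_nY}M_nX$ are fibrations;
\item cofibrations are determined by lifting.
\end{enumerate}

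Since $\mathcal C$ is cofibrantly generated, this Reedy structure is cofibrantly generated. I would cite the standard statement (Hirschhorn, Thm.~15.6.27, or Hovey, \S5.1) with generating sets
\[
  \{\partial\Delta[n]\otimes B\cup \Delta[n]\otimes A\to \Delta[n]\otimes B\},
\]
where $A\to B$ ranges over $I_{\mathcal C}$ (respectively $J_{\mathcal C}$), and $\Delta[n]$, $\partial\Delta[n]$ are the representable semisimplicial set and its boundary, tensored with objects of $\mathcal C$.

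\textbf{Step 2: transport.} The isomorphism $\Delta_{\mathrm{inj}}^{op}\cong\Delta_{\mathrm{epi}}$ induces an isomorphism of functor categories
\[
  \mathcal C^{\Delta_{\mathrm{inj}}^{op}}\cong \mathcal C^{\Delta_{\mathrm{epi}}}=\sC.
\]
Transporting the three classes along this isomorphism yields a model structure on $\sC$. It is cofibrantly generated, with the transported generating sets, because the isomorphism preserves colimits, lifting properties and smallness.

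\textbf{Step 3: conclusion.} Now $\mathcal C$ is bicomplete and $\sC$ carries a cofibrantly generated model structure, so the hypotheses of Theorem~\ref{thm:right-induced-OrdFor} hold. That theorem gives the right-induced cofibrantly generated model structure on $\arb{\mathcal C}$ along $\pi_\ast$. Its generating sets are $\pi^\ast(I_{\sC})$ and $\pi^\ast(J_{\sC})$ as in Remark~\ref{rem:generating-sets}, and the acyclicity condition is supplied by Proposition~\ref{prop:acyclicity-automatic}.

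\textbf{Main obstacle.} The only non-formal input is cofibrant generation of the Reedy structure, together with its smallness hypotheses. I would handle it by citing the direct-category case of the generating-set theorem rather than reproving it. Note that for a direct category the Reedy structure coincides with the projective structure, which is cofibrantly generated by $\{\Hom(c,-)\otimes i\}$; this gives an independent check of the citation.
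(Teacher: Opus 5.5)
You follow the paper's own route: Reedy structure on the semisimplicial side, transport to $\sC$, then Theorem~\ref{thm:right-induced-OrdFor}. However, Step~2 rests on an isomorphism that does not exist, even though the statement itself asserts it.

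\textbf{Step 2.} Take $\Delta_{\mathrm{inj}}$ on the objects $[n]$, $n\ge 0$, as the statement's reference to semisimplicial objects requires. Already $\Delta_{\mathrm{epi}}([1],[0])$ has one element while $\Delta_{\mathrm{inj}}([0],[1])$ has two. More invariantly, $[0]$ is terminal in $\Delta_{\mathrm{epi}}$, but $\Delta_{\mathrm{inj}}$ has no initial object.

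What is true is $\Delta_{\mathrm{epi}}\cong(\Delta_{\mathrm{inj}}^{+})^{op}$, where $\Delta_{\mathrm{inj}}^{+}$ adjoins $[-1]=\emptyset$. On objects this is $[n]\mapsto[n-1]$. On morphisms it sends a surjection $\sigma\colon[n]\to[m]$ to the injection $\{1,\dots,m\}\to\{1,\dots,n\}$, $j\mapsto\min\sigma^{-1}(j)$. So $\sC$ consists of \emph{augmented} semisimplicial objects, and transporting the semisimplicial Reedy structure does not produce a model structure on $\sC$.

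The repair is easy and removes Step~2 altogether. $\Delta_{\mathrm{epi}}$ is itself a Reedy category, inverse for the degree $[n]\mapsto n$. So the general cofibrant-generation theorem for Reedy structures that you cite in Step~1 applies directly to $\sC=\mathcal C^{\Delta_{\mathrm{epi}}}$. Its generators are $\partial E_n\otimes B\cup E_n\otimes A\to E_n\otimes B$, where $E_n=\Delta_{\mathrm{epi}}([n],-)$ and $\partial E_n\subseteq E_n$ consists of the non-identity surjections. Step~3 then goes through unchanged.

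\textbf{The ``independent check''.} This is also wrong. Both $\Delta_{\mathrm{inj}}^{op}$ and $\Delta_{\mathrm{epi}}$ are \emph{inverse} categories, since non-identity maps lower degree. For diagrams on them the latching objects are initial. The Reedy structure therefore has levelwise cofibrations and matching-map fibrations, as in your own item~(ii).

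It is not the projective structure generated by $\{\Hom(c,-)\otimes i\}$, whose fibrations are levelwise. For example, the constant semisimplicial object on a Kan complex $K$ with a nondegenerate edge is levelwise fibrant. Its matching map $X_1\to M_1X$, however, is the diagonal $K\to K\times K$, which is not a fibration. Citing the direct-category case is the wrong variance.

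\textbf{Where the real content lies.} It is not Reedy cofibrant generation but the transfer in Theorem~\ref{thm:right-induced-OrdFor}, which you, like the paper, use as a black box. Be aware that Proposition~\ref{prop:acyclicity-automatic} checks only the maps $\pi^\ast(j)$ themselves. The transfer theorem needs $\pi_\ast$ to send all relative $\pi^\ast(J_{\sC})$-cell complexes to weak equivalences, together with smallness. Now $(\pi_\ast X)_n$ is the joint equalizer of the pairs $X(h),X(h')$ for $h,h'\colon[m]\to[n]$ in $\OrdFor$ with $\pi h=\pi h'$. Such equalizers need not commute with those pushouts and transfinite composites, so this step is not formal.
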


\subsection{Quillen equivalence}\label{subsec:quillen-equivalence}

Assume that $\mathcal C$ is bicomplete, that $\sC=\mathcal C^{\Delta_{\mathrm{epi}}}$ carries a
cofibrantly generated model structure, and that $\arb{\mathcal C}=\mathcal C^{\OrdFor^{op}}$ is
equipped with the right-induced model structure of Theorem~\ref{thm:right-induced-OrdFor}. By
construction, weak equivalences and fibrations in $\arb{\mathcal C}$ are created by
$\pi_\ast\colon \arb{\mathcal C}\longrightarrow \sC$.

\begin{theorem}\label{thm:quillen-equivalence}
  Under these hypotheses, the adjunction
  $\pi^\ast\colon \sC \rightleftarrows \arb{\mathcal C}:\pi_\ast$ is a Quillen equivalence.
\end{theorem}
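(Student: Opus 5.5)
First I would check that $\pi^\ast\dashv\pi_\ast$ is a Quillen adjunction. This is immediate from the right-induced structure of Theorem~\ref{thm:right-induced-OrdFor}. By Definition~\ref{def:ordfor-weak-fib}, a morphism $f$ of $\arb{\mathcal C}$ is a fibration (resp.\ an acyclic fibration) exactly when $\pi_\ast f$ is a fibration (resp.\ an acyclic fibration) in $\sC$. Hence the right adjoint $\pi_\ast$ preserves fibrations and acyclic fibrations, which is one of the standard equivalent characterizations of a Quillen pair.

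Next I would use the characterization of Quillen equivalences valid when the right adjoint creates weak equivalences. In that situation it suffices to show that for every cofibrant $A\in\sC$ the derived unit
\[
A\xrightarrow{\ \eta_A\ }\pi_\ast\pi^\ast A\longrightarrow \pi_\ast R\pi^\ast A
\]
is a weak equivalence in $\sC$, where $R$ denotes fibrant replacement in $\arb{\mathcal C}$. To justify this, recall that $X$ fibrant and $\pi^\ast A\to X$ correspond under adjunction to $A\to\pi_\ast X$. The map $\pi^\ast A\to X$ is a weak equivalence iff $\pi_\ast$ of it is a weak equivalence. That morphism is $\pi_\ast\pi^\ast A\to\pi_\ast X$, and its composite with $\eta_A$ is the adjunct $A\to\pi_\ast X$. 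By Lemma~\ref{lem:pi-star-fully-faithful} the unit $\eta_A$ is an isomorphism, so two-out-of-three shows the adjunct is a weak equivalence iff $\pi^\ast A\to X$ is. This gives the Hovey criterion (Hovey, Prop.~1.3.13) directly for all $A$ and all fibrant $X$, with no cofibrancy needed.

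For robustness, I would also record the alternative argument via homotopy categories. Theorem~\ref{thm:localization-equivalence} applied to $W_{\sC}$ shows that $\pi^\ast$ and $\pi_\ast$ induce inverse equivalences $\Ho(\sC)\simeq\Ho(\arb{\mathcal C})$ at the level of Gabriel--Zisman localizations. Since $\pi_\ast$ preserves all weak equivalences, its right derived functor agrees with the localized functor $L(\pi_\ast)$. A Quillen pair whose total right derived functor is an equivalence is a Quillen equivalence.

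The main obstacle is the left derived side, since $\pi^\ast$ need not preserve arbitrary weak equivalences. This is why I would argue through the adjunct criterion above, which only needs $\pi_\ast$ to reflect weak equivalences and $\eta$ to be an isomorphism. The only point to verify carefully is the identification of the adjunct of $\pi^\ast A\to X$ as $\pi_\ast(\text{map})\circ\eta_A$, which is the standard formula for adjuncts.
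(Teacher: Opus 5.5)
Your proposal is correct and is essentially the paper's argument. The paper obtains the Quillen adjunction from the right-induced structure, then combines the invertible unit (Lemma~\ref{lem:pi-star-fully-faithful}) with the fact that $\pi_\ast$ creates weak equivalences, packaged via Hovey's Corollary~1.3.16(c) on the derived unit. Your adjunct computation $f^\flat=\pi_\ast(f)\circ\eta_A$ instead checks the defining condition directly (that is Hovey's Definition~1.3.12 rather than Proposition~1.3.13), and it works for every $A$ and $X$, with no (co)fibrancy needed.

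One correction to your last paragraph: $\pi^\ast$ does preserve all weak equivalences here. Naturality of the invertible unit gives $\pi_\ast\pi^\ast u=\eta_B\,u\,\eta_A^{-1}$, as in the proof of Theorem~\ref{thm:localization-equivalence}, so there is no obstacle on the left derived side; your argument never depends on that remark.
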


\begin{proof}
  Since the model structure on $\arb{\mathcal C}$ is right-induced along $\pi_\ast$, the right
  adjoint $\pi_\ast$ preserves fibrations and acyclic fibrations. Hence $\pi^\ast\dashv \pi_\ast$ is
  a Quillen adjunction.

  We apply \cite[Corollary~1.3.16(c)]{Hovey1999}. The functor $\pi_\ast$ reflects weak equivalences,
  hence in particular weak equivalences between fibrant objects, since weak equivalences in
  $\arb{\mathcal C}$ are defined via $\pi_\ast$.

  Now let $A$ be cofibrant in $\sC$, and let $\pi^\ast A \to R\pi^\ast A$ be a fibrant replacement
  in $\arb{\mathcal C}$. Applying $\pi_\ast$ gives a weak equivalence
  \[
    \pi_\ast\pi^\ast A \longrightarrow \pi_\ast R\pi^\ast A
  \]
  in $\sC$. By Lemma~\ref{lem:pi-star-fully-faithful}, the unit
  $\eta_A\colon A\to \pi_\ast\pi^\ast A$ is an isomorphism. Hence the composite
  \[
    A \xrightarrow{\ \cong\ } \pi_\ast\pi^\ast A \longrightarrow \pi_\ast R\pi^\ast A
  \]
  is a weak equivalence in $\sC$. Therefore the criterion of \cite[Corollary~1.3.16(c)]{Hovey1999}
  applies, and $\pi^\ast\dashv \pi_\ast$ is a Quillen equivalence.
\end{proof}

\begin{corollary}\label{cor:derived-equivalence}
  Under the hypotheses of Theorem~\ref{thm:quillen-equivalence}, the total left derived functor of
  $\pi^\ast$ and the total right derived functor of $\pi_\ast$ induce an equivalence of homotopy
  categories
  \[
    \mathbf L\pi^\ast\colon \Ho(\sC)\simeq \Ho(\arb{\mathcal C}):\mathbf R\pi_\ast.
  \]
\end{corollary}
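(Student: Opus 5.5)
The plan is to deduce Corollary~\ref{cor:derived-equivalence} directly from Theorem~\ref{thm:quillen-equivalence}, using the standard fact that a Quillen equivalence induces an adjoint equivalence of homotopy categories \cite[Proposition~1.3.13]{Hovey1999}. No further combinatorics of $\OrdFor$ is needed; all input is the Quillen equivalence together with Lemma~\ref{lem:pi-star-fully-faithful}.

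\textbf{Step 1: the derived functors.} For a Quillen adjunction, the total left derived functor is
\[
  \mathbf L\pi^\ast(A)=\pi^\ast(QA),
\]
where $QA$ is a cofibrant replacement. The total right derived functor is
\[
  \mathbf R\pi_\ast(X)=\pi_\ast(RX),
\]
where $RX$ is a fibrant replacement. Ken Brown's lemma ensures these are well defined on the homotopy categories and form an adjunction $\mathbf L\pi^\ast\dashv\mathbf R\pi_\ast$ between the homotopy categories of the two model structures.

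\textbf{Step 2: the homotopy categories agree.} Here $\Ho(\sC)$ and $\Ho(\arb{\mathcal C})$ were defined as Gabriel--Zisman localizations at $W_{\sC}$ and $W_{\arb{\mathcal C}}$. In the present setting these are precisely the model-categorical weak equivalences of Definition~\ref{def:ordfor-weak-fib}. So the model-theoretic homotopy categories coincide with the localizations of \S\ref{subsec:localization-equivalence}.

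\textbf{Step 3: the equivalence.} Hovey's criterion says the derived unit and counit are isomorphisms exactly when the adjunction is a Quillen equivalence, which Theorem~\ref{thm:quillen-equivalence} provides. As a consistency check, I would compare with Theorem~\ref{thm:localization-equivalence}. Since $\pi_\ast$ preserves all weak equivalences (by definition), $\mathbf R\pi_\ast$ is naturally isomorphic to $L(\pi_\ast)$. Since $\pi^\ast$ also preserves all weak equivalences, $\mathbf L\pi^\ast\cong L(\pi^\ast)$. The equivalence is therefore the same one as before.

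The only point requiring care is the identification in Step 3 of derived functors with the naive localized functors. This uses that replacement maps are weak equivalences and are sent to weak equivalences by functors preserving all of $W$. I expect this to be routine.
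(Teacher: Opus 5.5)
Your proposal is correct and follows essentially the same route as the paper, which simply cites \cite[Definition~1.3.12 and Proposition~1.3.13]{Hovey1999} on top of Theorem~\ref{thm:quillen-equivalence}. Your extra identification of $\mathbf L\pi^\ast$ and $\mathbf R\pi_\ast$ with the functors induced on localizations in Theorem~\ref{thm:localization-equivalence} is also correct, since $\pi_\ast$ preserves all weak equivalences by definition and $\pi^\ast$ does because the unit is an isomorphism. That identification is not needed for the corollary.
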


\begin{proof}
  This is the defining consequence of a Quillen equivalence; see \cite[Definition~1.3.12 and
  Proposition~1.3.13]{Hovey1999}.
\end{proof}

\begin{example}\label{ex:R-linear-model}  
  Let $R$ be a ring, and write $\Ch_{\ge 0}(R)$ for the category of nonnegatively graded chain
  complexes of $R$-modules, equipped with the projective model structure.  Define a functor
  $N\colon \sMod{R}\longrightarrow \Ch_{\ge 0}(R)$ by
  \[
    N_n(X):=\bigcap_{i=0}^{n-1}\ker(d_i:X_n\to X_{n-1})
  \]
  where $\partial_n:=(-1)^n d_n\big|_{N_n(X)}$.  Its left adjoint
  $\Gamma\colon \Ch_{\ge 0}(R)\longrightarrow \sMod{R}$ is given by
  \[
    \Gamma(C)_n:=C_n,
    \qquad
    d_i=0\ (0\le i<n),
    \qquad
    d_n:=(-1)^n\partial_n.
  \]
  One checks directly that $\Gamma\dashv N$ and that the unit
  $\mathrm{Id}_{\Ch_{\ge 0}(R)}\longrightarrow N\Gamma$ is an isomorphism.  Therefore the standard
  acyclicity condition for right-induced transfer along $N$ is automatic. Hence $\sMod{R}$ admits
  the right-induced cofibrantly generated model structure from $\Ch_{\ge 0}(R)$ along $N$.
  Explicitly, a morphism $f$ in $\sMod{R}$ is
  \begin{enumerate}[(i)]
  \item a weak equivalence if $N(f)$ is a quasi-isomorphism;
  \item a fibration if $N(f)$ is degreewise surjective.
  \end{enumerate}

  Since $\pi^\ast\colon \sMod{R}\to \ArbMod{R}$ is fully faithful, the right-induced model structure on
  $\ArbMod{R}$ along
  \[
    \pi^\ast\colon \sMod{R}\rightleftarrows \ArbMod{R}:\pi_\ast
  \]
  also exists automatically, and this adjunction is a Quillen equivalence.
\end{example}

\subsection*{Acknowledgements}

The author acknowledges the use of large language models for copy-editing the manuscript.

\bibliographystyle{amsalpha}
\bibliography{references}

\end{document}